\newtheorem{theorem}{Theorem}
\newtheorem{lemma}[theorem]{Lemma}
\newtheorem{claim}[theorem]{Claim}
\newtheorem{corollary}[theorem]{Corollary}
\newtheorem{conjecture}{Conjecture}
\newtheorem{proposition}[theorem]{Proposition}
\theoremstyle{definition}
\newtheorem{ques}{Question}
\newtheorem{const}{Construction}
\newtheoremstyle{case}{}{}{\normalfont}{}{\itshape}{\normalfont:}{ }{}
\theoremstyle{case}
\DeclareMathOperator{\wsat}{wsat}
\DeclareMathOperator{\vspan}{span}
\DeclareMathOperator{\vdim}{dim}
\newcommand{\vc}{U}
\newcommand{\vb}{\textbf{u}}
\title{Weak saturation numbers of complete \\bipartite graphs in the clique}
\author{Gal Kronenberg \thanks{Mathematical Institute, University of Oxford, Oxford, UK. E-mail: \texttt{kronenberg@maths.ox.ac.uk}.} \and Ta\'isa Martins \thanks{Instituto de Matem\'atica, Universidade Federal Fluminense, Niter\'oi, Brazil. \newline E-mail: \texttt{tlmartins@id.uff.br}.} \and Natasha Morrison\thanks{Department of Mathematics and Statistics, University of Victoria, David Turpin Building,
3800 Finnerty Road, Victoria, B.C., Canada V8P 5C2. \newline E-mail: \texttt{nmorrison@uvic.ca}.}}
\date{}
\begin{document}
\maketitle

\begin{abstract}
The notion of weak saturation was introduced by Bollob\'as in 1968.
    Let $F$ and $H$ be graphs. A spanning subgraph $G \subseteq F$ is \emph{weakly $(F,H)$-saturated} if it contains no copy of $H$ but there exists an ordering $e_1,\ldots,e_t$ of $E(F)\setminus E(G)$ such that for each $i \in [t]$, the graph $G \cup \{e_1,\ldots,e_i\}$ contains a copy $H'$ of $H$ such that $e_i \in H'$. Define $\wsat(F,H)$ to be the minimum number of edges in a weakly $(F,H)$-saturated graph. In this paper, we prove for all $t \ge 2$ and $n \ge 3t-3$, that $\wsat(K_n,K_{t,t}) = (t-1)(n + 1 - t/2)$, and we determine the value of $\wsat(K_n,K_{t-1,t})$ as well. For fixed $2 \le s < t$, we also obtain bounds on $\wsat(K_n,K_{s,t})$ that are asymptotically tight.
    \end{abstract}

\section{Introduction}

 Let $F$ and $H$ be graphs. A spanning subgraph $G$ of $F$ is said to be \textit{weakly $(F,H)$-saturated}, if
$G$ contains no copies of $H$, but there exists an
ordering $e_1,\ldots, e_t$ of $E(F) \setminus E(G)$ such that the addition of $e_i$ to $G \cup \{e_1,\dots , e_{i-1}\}$
creates a new copy $H'$ of $H$ where $e_i \in H'$, for every $i \in [t]$.
The \textit{weak saturation number} of $H$ in $F$, is defined to be
$$\wsat(F,H):= \min \{|E(G)|: G \text{ is weakly }(F,H)\text{-}\text{saturated}\}.$$
That is, $\wsat(F,H)$ is the minimum number of edges of a weakly $(F, H)$-saturated graph. In the most natural case where $F$ is the complete graph on $n$ vertices, denoted $K_n$, we write $\wsat(n,H):=\wsat(K_n,H)$.

Weak saturation of graphs was initially introduced by Bollob\'as~\cite{BollobasWSat} in 1968 and has grown to a substantial area of research. Originally motivated by the problem of determining the saturation\footnote{A spanning subgraph $G \subseteq F$ is $(F,H)$-saturated, if it contains no copy of $H$ but the addition of any edge of $E(F)\setminus E(G)$ creates a copy of $H$.} number of $k$-uniform hypergraphs, Bollob\'{a}s determined $\wsat(n,K_m)$ for $3 \le m \le 7$ and conjectured that the graph obtained by removing a copy of $K_{n-r+2}$ from $K_n$ is best possible for $\wsat(n,K_r)$. Using a very elegant generalisation of the Bollob\'{a}s two families theorem~\cite{bol2f}, Lov\'asz~\cite{LovaszMultiLinear} was the first to confirm this conjecture.
\begin{theorem}[Lov\'asz~\cite{LovaszMultiLinear}]\label{thm:clique}
Let $n \ge r \ge 2$. Then
$$\wsat(n,K_r) = \binom{n}{2} - \binom{n-r+2}{2}.$$
\end{theorem}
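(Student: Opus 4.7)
The plan is to establish matching upper and lower bounds; the lower bound is where the real work lies.

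For the upper bound I would exhibit an explicit weakly saturated graph. Fix a set $T \subseteq V(K_n)$ with $|T| = n-r+2$ and let $G$ be the graph obtained from $K_n$ by deleting every edge inside $T$; equivalently, $G$ is the disjoint union of a clique on $V(K_n) \setminus T$ together with a complete bipartite graph between $T$ and $V(K_n) \setminus T$. This graph has exactly $\binom{n}{2} - \binom{n-r+2}{2}$ edges and contains no copy of $K_r$, since any $r$ vertices must include at least two in $T$, which are non-adjacent. To see that $G$ is weakly $(K_n, K_r)$-saturated, I would add the missing edges (those inside $T$) in any order: when an edge $uv$ with $u,v \in T$ is added, the $r$-set $(V(K_n) \setminus T) \cup \{u,v\}$ forms a copy of $K_r$, because the $r-2$ vertices of $V(K_n) \setminus T$ are pairwise adjacent and each is adjacent to both $u$ and $v$ in $G$, so the new edge $uv$ closes the clique.

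For the lower bound I would follow Lov\'asz's multilinear-algebra method. The aim is to produce a vector space $V$ and a linear map $\phi : \mathbb{R}^{E(K_n)} \to V$ with two properties: first, $\dim V = \binom{n}{2} - \binom{n-r+2}{2}$ and $\phi$ is surjective; second, for every copy $K$ of $K_r$ in $K_n$ and every edge $e \in E(K)$, the vector $\phi(\mathbf{1}_e)$ lies in the linear span of $\{\phi(\mathbf{1}_{e'}) : e' \in E(K),\, e' \neq e\}$. Granting such a $\phi$, the lower bound falls out quickly: given a weakly $(K_n,K_r)$-saturated $G$ with weak-saturation ordering $e_1, \ldots, e_t$ of $E(K_n) \setminus E(G)$, I would show by induction on $i$ that $\phi(\mathbf{1}_{e_i})$ lies in the span $U$ of $\{\phi(\mathbf{1}_e) : e \in E(G)\}$. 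Indeed, when $e_i$ is added it completes a copy $K$ of $K_r$; by the second property, $\phi(\mathbf{1}_{e_i})$ is a linear combination of the $\phi$-images of the remaining edges of $K$, each of which is either in $E(G)$ (so trivially in $U$) or appears as some $e_j$ with $j<i$ (so in $U$ by the inductive hypothesis). It follows that $\{\phi(\mathbf{1}_e) : e \in E(G)\}$ already spans $V$, whence $|E(G)| \geq \dim V = \binom{n}{2}-\binom{n-r+2}{2}$.

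The substantive obstacle is constructing $\phi$ with the correct rank. A natural first attempt is to pick generic vectors $a_v \in \mathbb{R}^{d}$ at each vertex (for some dimension $d$ depending on $n$ and $r$) and set $\phi(\mathbf{1}_{uv}) = a_u \wedge a_v \in \Lambda^2 \mathbb{R}^{d}$, using genericity to manufacture the local relations within each $K_r$ from linear dependences among the $r$ vectors $a_{v_1},\dots,a_{v_r}$, and to pin down the rank by a tensor-power argument. Reconciling the two requirements---enough dependences within each $K_r$ to produce the local relation (which wants $d$ small relative to $r$), yet an image of $\phi$ large enough to reach dimension $\binom{n}{2}-\binom{n-r+2}{2}$ (which wants $d$ as large as $n-r+2$)---is the crux of the proof, and is precisely what Lov\'asz achieves in \cite{LovaszMultiLinear} through his multilinear extension of the Bollob\'as two-families theorem.
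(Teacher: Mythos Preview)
The paper does not itself prove Theorem~\ref{thm:clique}; it is stated as a known result of Lov\'asz (with several independent later proofs cited), and is used as background and motivation rather than re-derived. So there is no ``paper's own proof'' to compare against beyond the one-line attribution that Lov\'asz's argument goes through a generalisation of the Bollob\'as two-families theorem.

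That said, your outline is on the right track. Your upper bound is exactly the construction the paper alludes to (``the graph obtained by removing a copy of $K_{n-r+2}$ from $K_n$''), and your verification that it is weakly $(K_n,K_r)$-saturated is correct. Your lower-bound framework is precisely the content of Lemma~\ref{LinearAlgebraBootstrapPer} in the paper, specialised to $H=K_r$. However, as you yourself flag, the heart of the matter---producing vectors that simultaneously satisfy the local dependence condition on every $K_r$ \emph{and} span a space of the full target dimension $\binom{n}{2}-\binom{n-r+2}{2}$---is not carried out; you defer it to \cite{LovaszMultiLinear}. Your tentative wedge-product construction $\phi(\mathbf{1}_{uv})=a_u\wedge a_v$ in $\Lambda^2\mathbb{R}^d$ with a single choice of $d$ cannot work as stated (for $d=r-1$ the image is far too small, for $d\ge r$ there is no dependence among generic $r$-sets), and resolving this tension genuinely requires the multilinear machinery you cite rather than a tweak of parameters. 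So what you have is a correct reduction plus an honest acknowledgement of where the real content lies, not a self-contained proof.
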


Independent proofs were later found by Alon \cite{AlonWeak1985}, Frankl~\cite{FranklWeakSat1982}, and Kalai~\cite{ KalaiWeakSat1985,KalaiWeakSat1984}. Interestingly, all these proofs utilise algebraic techniques and no combinatorial proof of Theorem~\ref{thm:clique} is known. 

Let $K_{s,t}$ denote the complete bipartite graph with vertex classes size $s$ and $t$. In this article, we study the next most natural question to consider regarding weak saturation: What is $\wsat(n,K_{s,t})$? In the case of the balanced complete bipartite graph we determine this number exactly. Our first theorem is the following.
\begin{theorem}\label{thm:KttWsat}
Let $t \ge 2$ and $n \ge 3t-3$. Then $$\wsat(K_n,K_{t,t}) = (t-1)(n+1 - t/2).$$
\end{theorem}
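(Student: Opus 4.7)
I would prove the theorem in two halves: the upper bound via an explicit construction, and the lower bound via the algebraic dimension framework of Kalai and Alon. For the upper bound, I partition $V(K_n)$ into three sets $R$, $T$, $S$ of sizes $t$, $t-1$, $n-2t+1$ respectively, and let $G$ have edge set consisting of all edges inside $R$, all edges between $R$ and $T$, and all edges between $T$ and $S$. A direct count gives $|E(G)| = \binom{t}{2} + t(t-1) + (t-1)(n-2t+1) = (t-1)(n+1-t/2)$. Every vertex of $S$ has degree $t-1 < t$, so no vertex of $S$ can lie in any copy of $K_{t,t}$; hence any such copy would sit inside $R \cup T$, which has only $2t-1 < 2t$ vertices, a contradiction. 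Thus $G$ is $K_{t,t}$-free.

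I would then show $G$ is weakly saturated by adding non-edges in three phases, verifying in each case that a $K_{t,t}$ containing the new edge is created. In Phase~1, for each $R$-$S$ edge $rs$, the sides $(R \setminus \{r\}) \cup \{s\}$ and $\{r\} \cup T$ form a $K_{t,t}$ with $rs$ as its only new edge; after this phase, every vertex of $R$ is universal. In Phase~2, for each within-$T$ edge $\{t_1, t_2\}$, I fix an arbitrary $r^* \in R$ and pick any $(t-1)$-subset $X' \subseteq S \cup \{r^*\}$; the sides $\{t_1\} \cup X'$ and $\{t_2\} \cup (R \setminus \{r^*\})$ then form a $K_{t,t}$. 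Such an $X'$ exists if and only if $|S| \ge t-2$, equivalently $n \ge 3t-3$, which is precisely our hypothesis, and this is the unique place the bound on $n$ is used. After Phase~2, every vertex of $T$ is also universal, so in Phase~3 I add each within-$S$ edge $\{s_1, s_2\}$ using sides $\{s_1\} \cup R'$ and $\{s_2\} \cup T$ for any $(t-1)$-subset $R' \subseteq R$.

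For the lower bound I would follow the standard algebraic method. One constructs a map $e \mapsto v_e$ from $E(K_n)$ into a vector space $W$ so that, for every copy $H$ of $K_{t,t}$ and every edge $e \in E(H)$, the vector $v_e$ lies in $\vspan\{v_f : f \in E(H) \setminus \{e\}\}$. An induction along any saturating sequence then shows $\vspan\{v_e : e \in E(G)\} = \vspan\{v_e : e \in E(K_n)\}$ for every weakly $(K_n, K_{t,t})$-saturated $G$, giving $|E(G)| \ge \vdim \vspan\{v_e : e \in E(K_n)\}$. The main obstacle is to design the vectors $v_e$ so that this ambient dimension is at least $(t-1)(n+1-t/2)$. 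A natural template is to assign a generic vector $\vb_v$ in a $(t-1)$-dimensional space to each vertex $v$ and to define $v_e$ as a symmetric bilinear combination of $\vb_u$ and $\vb_v$ for $e = \{u, v\}$, engineered so that the $t^2$ edge-vectors of any $K_{t,t}$ satisfy one nontrivial linear relation. The extremal graph from the upper bound serves as a witness for the dimension lower bound: its $(t-1)(n+1-t/2)$ edge-vectors should be shown linearly independent in $W$, matching the construction precisely.
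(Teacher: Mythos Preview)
Your upper-bound construction and three-phase ordering are correct and coincide with the paper's (with $R,T,S$ playing the roles of the paper's $X,Y,Z$); the paper adds the within-$S$ edges before the within-$T$ edges, but your ordering works just as well and isolates the hypothesis $n\ge 3t-3$ at the analogous point.

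For the lower bound, your framework is exactly the paper's (the dimension lemma of Balogh--Bollob\'as--Morris--Riordan), but your description of the edge-vectors is misleading and glosses over the hard part. The phrase ``symmetric bilinear combination of $\vb_u$ and $\vb_v$'' suggests something living in $\mathbb{R}^{t-1}\otimes\mathbb{R}^{t-1}$ or its symmetric part; that space has dimension independent of $n$, so no such choice can give a span of dimension $(t-1)n+O_t(1)$. The paper instead takes $W=\bigoplus_{v\in V(K_n)}\mathbb{R}^{t-1}$ (so $\dim W=n(t-1)$) and, for $e=xy$, puts $\vb_y$ in the $x$-block, $\vb_x$ in the $y$-block, and zeros elsewhere. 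The dependence over a copy of $K_{t,t}$ with parts $\{v_1,\dots,v_t\},\{w_1,\dots,w_t\}$ then comes from the unique relations $\sum_i\alpha_i\vb_{v_i}=0$ and $\sum_j\beta_j\vb_{w_j}=0$ via the coefficients $c_{v_iw_j}=\alpha_i\beta_j$. The substantive step you leave unaddressed---and where the real work lies---is verifying that the $(t-1)(n+1-t/2)$ edge-vectors of your graph on $R\cup T\cup S$ are linearly independent in $W$; in the paper this is a delicate argument exploiting general position and the uniqueness (up to scaling) of the dependence among any $t$ of the $\vb_v$'s, and it is not automatic.
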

Given this theorem, a short argument yields an exact result for $K_{t,t+1}$.
\begin{corollary}\label{corr:Ktt+1}
	Let $t \ge 2$ and $n \ge 3t-3$. Then $$\wsat(K_n, K_{t,t+1}) = (t-1)(n + 1 - t/2) +1.$$
\end{corollary}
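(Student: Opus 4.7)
My plan is to sandwich $\wsat(K_n, K_{t,t+1})$ between $(t-1)(n+1-t/2)+1$ and itself, deducing each bound from Theorem~\ref{thm:KttWsat}.

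For the lower bound, I would let $H$ be a weakly $(K_n, K_{t,t+1})$-saturated graph with saturation ordering $e_1, \ldots, e_m$, and first observe that $H$ must contain at least one copy of $K_{t,t}$. Indeed, adding $e_1$ creates a copy of $K_{t,t+1}$ containing $e_1$; writing $e_1 = xy$ with $x$ in the size-$t$ side $X$ and $y$ in the size-$(t+1)$ side $Y$, the subgraph on $X \cup (Y \setminus \{y\})$ is a $K_{t,t}$ all of whose edges lie in $H$. Next, I would iteratively delete an edge of some $K_{t,t}$ of the current graph, terminating when the graph $H''$ is $K_{t,t}$-free; this removes $k \ge 1$ edges $f_1, \ldots, f_k$. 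The graph $H''$ is then weakly $(K_n, K_{t,t})$-saturated with ordering $f_k, f_{k-1}, \ldots, f_1, e_1, \ldots, e_m$: each $f_j$ restores a $K_{t,t}$ containing it by construction, and each $e_i$ creates a new $K_{t,t+1}$, hence (by the same subgraph argument) a new $K_{t,t}$ containing $e_i$. Applying Theorem~\ref{thm:KttWsat} gives $|E(H)| \ge |E(H'')| + k \ge (t-1)(n+1-t/2) + 1$.

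For the upper bound, I would take an extremal weakly $(K_n, K_{t,t})$-saturated graph $G$ from Theorem~\ref{thm:KttWsat} and form $G^+ = G + e_0$, where $e_0$ is the first edge in $G$'s saturation ordering. Then $|E(G^+)| = (t-1)(n+1-t/2) + 1$, and $G^+$ is $K_{t,t+1}$-free, since any $K_{t,t+1}$ in $G^+$ must use $e_0$ (else $G$ already contains $K_{t,t}$), but removing $e_0$ from that $K_{t,t+1}$ still leaves a $K_{t,t}$ in $G$, a contradiction. For the weak-saturation ordering of $G^+$ I would use $e_2, \ldots, e_m$, and at each step extend the created $K_{t,t}$ to a $K_{t,t+1}$ containing the new edge, either by incorporating $e_0$ or by using a near-universal vertex from the extremal construction provided by Theorem~\ref{thm:KttWsat}.

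The main obstacle is the extension step in the upper bound. Whereas $K_{t,t+1}$ automatically contains a $K_{t,t}$ through any given edge (making the lower bound essentially formal given the theorem), the reverse requires a ``spare'' vertex adjacent to one entire side of the created $K_{t,t}$, and verifying that such a vertex exists along the full saturation ordering requires a careful understanding of the structure of the extremal construction and its saturation order from Theorem~\ref{thm:KttWsat}.
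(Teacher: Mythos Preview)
Your lower-bound argument is correct and is essentially the paper's: find a $K_{t,t}$ in $H$, pass to a $K_{t,t}$-free subgraph by deleting at least one edge, and note that the deleted edges followed by the original ordering witness weak $(K_n,K_{t,t})$-saturation. (One small slip: the subgraph $X\cup(Y\setminus\{y\})$ you exhibit does \emph{not} contain $e_1$; for the later step, where you need a $K_{t,t}$ through $e_i$, you must delete a vertex of the $(t{+}1)$-side other than the endpoint of $e_i$. This is easy to fix.)

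The upper bound, however, has a real gap, and it is exactly the obstacle you flag: the graph $G^+ = G_n + e_0$ is in general \emph{not} weakly $(K_n,K_{t,t+1})$-saturated, so no ordering can rescue it. Already for $t=2$ and $n=5$ this fails. With $G_5$ the paper's extremal graph ($X=\{1,2\}$ a clique, $Y=\{3\}$ joined to everything, $Z=\{4,5\}$ independent), the only possible first edges are $X$--$Z$ edges; take $e_0=14$. The $K_{2,3}$-bootstrap closure of $G_5+14$ adds only the edge $15$ and then stops: at that point vertices $2,4,5$ all have neighbourhood exactly $\{1,3\}$, and one checks directly that no missing edge among $\{2,4,5\}$ lies in a copy of $K_{2,3}$. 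Thus $G_5+e_0$ does not percolate to $K_5$. The same obstruction persists for larger $t$: after adding all $x_0$--$Z$ edges the remaining vertices of $X\cup Z$ have common neighbourhood of size only $t$, too small to host the needed $K_{t,t+1}$.

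The paper therefore does \emph{not} take your route for the upper bound. Instead it gives an explicit graph $F_n$: take $G_n$, single out one vertex $z^*\in Z$, delete its $t-1$ edges to $Y$, and replace them by $t$ edges to $X$ (so $z^*$ becomes a new ``$y^*$''). This has the right edge count but is not of the form $G_n+e$, and one then verifies directly that it is weakly $(K_n,K_{t,t+1})$-saturated by adding, in order, the $y^*$--$Z$ edges, the $X$--$Z$ edges, the edges inside $Z$, and finally the edges inside $Y\cup\{y^*\}$.
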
 

For when $s<t$, we also obtain a general bound for $K_{s,t}$, which is tight asymptotically for fixed $s,t$ and $n$ large. 

\begin{theorem}\label{thm:genst}
Let $2 \le s < t$ and $n \ge 4t$. Then
$$\wsat(n,K_{s,t}) = n(s-1) + c(s,t),$$
where $c(s,t)$ is an integer depending only on $s$ and $t$.
\end{theorem}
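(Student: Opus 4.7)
My plan is to establish the recursion $\wsat(n+1, K_{s,t}) = \wsat(n, K_{s,t}) + (s-1)$ for all $n \ge 4t - 1$; iterating from a base case then yields $\wsat(n, K_{s,t}) = (s-1)n + c(s,t)$ with integer constant $c(s,t) = \wsat(4t-1, K_{s,t}) - (s-1)(4t-1)$ depending only on $s$ and $t$.

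For the upper direction $\wsat(n+1, K_{s,t}) \le \wsat(n, K_{s,t}) + (s-1)$, I begin from an optimal weakly $(K_n, K_{s,t})$-saturated graph $G$ on $[n]$ and form $G'$ on $[n+1]$ by attaching a new vertex $v$ to any $s - 1$ vertices of $G$. Because $d_{G'}(v) = s - 1 < s$, vertex $v$ cannot lie in any $K_{s,t}$, so $G'$ is $K_{s,t}$-free. To verify weak saturation of $G'$, I first run the saturation order of $G$ inside $[n]$ (each certifying copy lives in $[n]$ and therefore persists in the corresponding state of $G'$), which brings $G'$ restricted to $[n]$ up to $K_n$. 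I then add the remaining $v$-edges one by one: for each new edge $\{v,w\}$, after the addition $v$ has at least $s$ neighbours, and I form a $K_{s,t}$ whose $s$-side consists of $s$ current neighbours of $v$ (including $w$) and whose $t$-side is $\{v\}$ together with $t-1$ vertices of $[n]$ outside the $s$-side; these $t - 1$ vertices exist because $n - s \ge t - 1$ whenever $n \ge 4t$.

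For the lower direction $\wsat(n+1, K_{s,t}) \ge \wsat(n, K_{s,t}) + (s-1)$, let $G'$ be a minimum weakly $(K_{n+1}, K_{s,t})$-saturated graph. The standard first-edge argument gives minimum degree at least $s - 1$: when the first saturation edge incident to a vertex $v$ is added, $v$ lies in the resulting $K_{s,t}$ copy and has at least $s$ neighbours in it, hence $d_{G'}(v) \ge s - 1$. I then seek a vertex $v$ with $d_{G'}(v) = s - 1$ such that $G' - v$ is weakly $(K_n, K_{s,t})$-saturated; such a $v$ gives $|E(G')| \ge |E(G' - v)| + (s-1) \ge \wsat(n, K_{s,t}) + (s-1)$, completing the recursion. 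My approach is to reorder the given saturation order of $G'$, placing all $v$-incident edges at the end, so that throughout the resulting ``non-$v$ phase'' vertex $v$ has degree $s - 1 < s$ and therefore lies in no $K_{s,t}$ copy; it remains to show that every non-$v$ edge added in this phase completes a $K_{s,t}$ copy in the current (reordered) graph.

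The main obstacle lies in this reordering step: in the original order, the $K_{s,t}$ copy certifying a non-$v$ edge might crucially depend on $v$-edges not yet present in the reordered sequence, so one must instead exhibit alternative $v$-free copies. My plan is to use $n \ge 4t$ to guarantee sufficient room, combined with an averaging argument over candidate vertices $v$ of degree $s - 1$: since such $v$'s lie in few $K_{s,t}$ copies, a pigeonhole argument should produce a single $v$ admitting alternative copies for every problematic edge simultaneously. Controlling this uniformly across the entire saturation order is the most delicate technical point and is where the hypothesis $n \ge 4t$ is essentially used.
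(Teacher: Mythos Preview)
Your upper recursion $\wsat(n+1,K_{s,t}) \le \wsat(n,K_{s,t}) + (s-1)$ is correct. The gap is entirely in the lower direction, and it is a real one.

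Two concrete problems with your plan for $\wsat(n+1,K_{s,t}) \ge \wsat(n,K_{s,t}) + (s-1)$. First, you need an optimal $G'$ to contain a vertex of degree exactly $s-1$, but the first-edge argument only yields $\delta(G')\ge s-1$; for $s\ge 3$ a crude edge count does not exclude $\delta(G')\ge s$, since a graph on $n+1$ vertices with minimum degree $s$ can have as few as $\lceil(n+1)s/2\rceil < (s-1)(n+1)$ edges. Second, even granting such a $v$, the reordering step is only a sketch. The issue is not that $v$ lies in few copies of $K_{s,t}$ globally; it is that at the moment a non-$v$ edge $e_i$ is added, the only available certificate may use $v$-edges already inserted, and you must exhibit a \emph{single} $v$ for which every such $e_i$ simultaneously admits an alternative $v$-free certificate at its own step. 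A pigeonhole over at most $n+1$ candidate $v$'s against $\Theta(n^2)$ saturation steps does not deliver this: different steps can be obstructed by different candidates, and nothing forces a common good choice.

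The paper does not attempt a lower recursion at all. It proves an absolute lower bound by adjoining $t-s$ new vertices, each complete to $V(G)$, to any weakly $(K_n,K_{s,t})$-saturated $G$; the resulting graph is weakly $(K_{n+t-s},K_{t,t})$-saturated, and invoking Theorem~\ref{thm:KttWsat} yields $\wsat(n,K_{s,t}) \ge (s-1)(n-t+1)+\binom{t}{2}$. Together with an explicit construction giving $\wsat(n,K_{s,t}) \le (s-1)(n-s)+\binom{t}{2}$ and your (correct) upper recursion, one sees that $\wsat(n,K_{s,t})-(s-1)n$ is a non-increasing integer sequence bounded below by a constant depending only on $s,t$, hence eventually constant. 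Thus the substantive input is the exact value of $\wsat(\cdot,K_{t,t})$, obtained via the linear-algebraic Lemma~\ref{LinearAlgebraBootstrapPer}; no purely combinatorial argument of the type you outline is known to replace it.
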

Surprisingly, despite the large body of work and number of alternate proofs to determine $\wsat(n,K_m)$, prior to this work very little was known about $\wsat(n,K_{s,t})$. The case when $s=2$ and $t=3$,  was shown to be $n+1$, by Faudree, Gould and Jacobson~\cite{FaudreeWsatSparseGraphs}, who also determined the weak saturation number for various families of sparse graphs. A trivial lower bound of $n \cdot (s-1)/2$ can be obtained by observing that every vertex in a weakly $(K_n,K_{s,t})$-saturated graph must have degree at least $(s-1)$, when $s\leq t$. But other than this, no general lower bound was previously known. 

A more well studied setting is where the weak saturation process takes place inside a bipartite ambient graph (i.e.~$H$ is bipartite). In~\cite{AlonWeak1985}, Alon studied a labelled version of this problem called \emph{bisaturation}.
For a bipartite graph $H=(V_1\cup V_2,E)$, we say that a spanning subgraph $G\subseteq K_{\ell,m}$ is weakly $(K_{\ell,m},H)$-bisaturated, if there exists an
ordering $e_1,\ldots,e_t$ of $E(K_{\ell,m}) \setminus E(G)$ such that the addition of $e_i$ to $G \cup \{e_1,\dots , e_{i-1}\}$ will
create a new copy $H'$ of $H$ where $e_i \in H'$, with $V_1$ in the first class and $V_2$ in the second class, for every $i \in [t]$. The \emph{bisaturation number}, denoted $w(\ell,m,H)$, is the minimal possible number of edges in a weakly $H$-bisaturated graph. 

The bisaturation number is closely related to the weak saturation number inside a bipartite graph. Indeed, as every weakly $(K_{\ell,m},H)$-bisaturated graph $G$ is also weakly $(K_{\ell,m},H)$-saturated, we have
 $w(\ell,m,H)\geq \text{wsat}(K_{\ell,m},H)$.  In addition, when $s<t$ we also have $w(s,n-s,K_{s,t})= \text{wsat}(K_{s,n-s},K_{s,t})$, and when $\ell,m\geq t$, we have $w(\ell,m,K_{t,t})= \text{wsat}(K_{\ell,m},K_{t,t})$. The value of $w(\ell,m,K_{r,t})$ was determined precisely by Alon,  who also proved a generalization for hypergraphs.
 
 \begin{theorem}[Alon~\cite{AlonWeak1985}] 
 For $2\leq s\leq t$  and $2\leq \ell \leq m$, we have $$w(\ell,m,K_{s,t})=\ell\cdot m-(\ell-s+1)(m-t+1).$$
 \end{theorem}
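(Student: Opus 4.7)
My plan is to prove the two bounds separately in the style of Bollob\'as and Lov\'asz.

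For the upper bound, I would exhibit an explicit construction. Fix $A \subseteq V_1$ of size $\ell - s + 1$ and $B \subseteq V_2$ of size $m - t + 1$, and set $G := K_{\ell,m} - (A \times B)$, which has $\ell m - (\ell - s + 1)(m - t + 1)$ edges. First, $G$ contains no labeled copy of $K_{s,t}$: any $s$-subset of $V_1$ meets $A$, so at least one chosen left vertex is non-adjacent to every vertex of $B$, forcing any common right-neighborhood to lie in $V_2 \setminus B$, which has size $t - 1$. Next, I would enumerate $A \times B$ arbitrarily and observe that adding any $(a,b) \in A \times B$ completes a labeled $K_{s,t}$ on $\{a\} \cup (V_1 \setminus A)$ versus $\{b\} \cup (V_2 \setminus B)$, since every other edge of this bipartite clique is already present in $G$.

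For the lower bound, I would use an algebraic argument mirroring Lov\'asz's proof of \Cref{thm:clique}. Pick vectors $v_1, \ldots, v_\ell \in \mathbb{R}^{s-1}$ and $w_1, \ldots, w_m \in \mathbb{R}^{t-1}$ in general position, so that any $s - 1$ of the $v_i$ and any $t - 1$ of the $w_j$ are linearly independent. Let $e_1, \ldots, e_\ell$ and $f_1, \ldots, f_m$ be the standard bases of $\mathbb{R}^\ell$ and $\mathbb{R}^m$, and for each $(i,j) \in [\ell] \times [m]$ set
$$ u_{ij} \;:=\; v_i \otimes f_j \;+\; e_i \otimes w_j \;\in\; (\mathbb{R}^{s-1} \otimes \mathbb{R}^m) \oplus (\mathbb{R}^\ell \otimes \mathbb{R}^{t-1}). $$
For any labeled $K_{s,t}$-copy on $I \times J$, general position supplies nonzero scalars $(\alpha_i)_{i \in I}$ and $(\beta_j)_{j \in J}$ with $\sum_{i \in I} \alpha_i v_i = 0$ and $\sum_{j \in J} \beta_j w_j = 0$, and then $\sum_{i \in I, j \in J} \alpha_i \beta_j\, u_{ij} = 0$ by bilinearity, with every coefficient $\alpha_i \beta_j$ nonzero. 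Walking through the bisaturation ordering of any weakly bisaturated $G$ shows inductively that $\{u_e : e \in E(G)\}$ spans the same subspace as $\{u_{ij} : (i,j) \in [\ell] \times [m]\}$, so $|E(G)| \ge \dim \operatorname{span}\{u_{ij}\}$.

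The remaining step, which I expect to be the most delicate, is to verify that $\dim \operatorname{span}\{u_{ij}\} = \ell m - (\ell - s + 1)(m - t + 1)$. Packaging coefficients as an $\ell \times m$ matrix $X$, the linear map $X \mapsto \sum_{i,j} x_{ij} u_{ij}$ becomes $X \mapsto (V^\top X,\, X W)$, where $V$ and $W$ are the matrices with rows $v_i^\top$ and $w_j^\top$. Its kernel consists of those $X$ that vanish on the column span of $W$ (a $(t-1)$-dimensional subspace of $\mathbb{R}^m$) and whose image lies in $\ker V^\top$ (an $(\ell - s + 1)$-dimensional subspace of $\mathbb{R}^\ell$); since $V$ and $W$ have full column rank, this kernel has dimension exactly $(\ell - s + 1)(m - t + 1)$, and rank--nullity then gives the claimed image dimension. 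Choosing the right ambient space, neither $\mathbb{R}^{s-1} \otimes \mathbb{R}^{t-1}$ (too small to give the right rank) nor $\mathbb{R}^\ell \otimes \mathbb{R}^m$ (too large to cut out the correct subspace), is the key conceptual move; once it is in place the rest is bookkeeping.
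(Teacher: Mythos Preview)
The paper does not itself prove this theorem; it is quoted as a result of Alon and serves only as background for Theorems~\ref{thm:MS} and~\ref{thm:wsat-imp}. So there is no proof in the paper to compare your proposal against.

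Your argument is nonetheless correct, and it is exactly the labelled-bipartite analogue of the technique the paper uses for its own lower bound. The vectors $u_{ij}=v_i\otimes f_j+e_i\otimes w_j$ are the two-sided version of the vectors $f_e$ built in Lemma~\ref{lem:low} (there $\pi_x(f_e)=\mathbf u_y$ and $\pi_y(f_e)=\mathbf u_x$, with a single general-position family in $\mathbb R^{t-1}$), and the product-coefficient dependence $\sum\alpha_i\beta_j\,u_{ij}=0$ is the same computation as Claim~\ref{cl:dep}. Your dimension count via the matrix map $X\mapsto(V^{\top}X,\,XW)$ and rank--nullity is in fact tidier than what the paper does for its own problem: in Claim~\ref{cl:dim} the authors do not compute the full span but instead verify linear independence on the explicit edge set $E(G_n)$ of their upper-bound construction. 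The ``walking through the ordering'' step in your write-up is precisely Lemma~\ref{LinearAlgebraBootstrapPer}. One small caveat: your construction and the general-position families tacitly require $\ell\ge s$ and $m\ge t$, which is the nondegenerate regime even though the hypotheses as quoted in the paper do not make this explicit.
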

 
Moshkovitz and Shapira~\cite{MoshShapira} showed how to deduce $\wsat(K_{n,n},K_{s,t})$ from this result, giving the following theorem.
\begin{theorem}[Moshkovitz and Shapira~\cite{MoshShapira}]\label{thm:MS}
Let $2 \le s \le t \le n$. Then
$$\wsat(K_{n,n},K_{s,t})=n^2-(n-s+1)^2+(t-s)^2.$$
\end{theorem}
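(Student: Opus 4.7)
The plan is to prove the theorem by exhibiting a matching weakly saturated graph (upper bound) and deducing the lower bound from Alon's preceding bisaturation theorem via a split-by-orientation argument strengthened with an algebraic correction.

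For the upper bound, I will construct a weakly saturated graph whose non-edges form two overlapping rectangles. Label the parts of $K_{n,n}$ as $A,B$, and fix nested sets $A_2 \subseteq A_1 \subseteq A$ with $|A_1| = n-s+1$, $|A_2| = n-t+1$, and $B_1 \subseteq B_2 \subseteq B$ with $|B_1| = n-t+1$, $|B_2| = n-s+1$. Let $G$ be the complement in $K_{n,n}$ of $F = (A_1 \times B_1) \cup (A_2 \times B_2)$. Inclusion-exclusion on the two rectangles, whose intersection $A_2 \times B_1$ has size $(n-t+1)^2$, yields
$$|F| = 2(n-s+1)(n-t+1) - (n-t+1)^2 = (n-s+1)^2 - (t-s)^2,$$
as required. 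To witness weak saturation, add non-edges in two phases: first add the edges of $A_2 \times (B_2 \setminus B_1)$, each completing an orientation-$2$ copy of $K_{s,t}$ (with $t$ vertices $\{a\} \cup (A \setminus A_2)$ in $A$ and $s$ vertices $\{b\} \cup (B \setminus B_2)$ in $B$); then add the edges of $A_1 \times B_1$, each completing an orientation-$1$ copy with $A$-side $\{a\} \cup (A \setminus A_1)$ and $B$-side $\{b\} \cup (B \setminus B_1)$. A routine check using the nesting shows that every other edge of the constructed $K_{s,t}$ is either in $E(G)$ or has been added in Phase~1.

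For the lower bound, let $G$ be an arbitrary weakly $(K_{n,n},K_{s,t})$-saturated graph with non-edge set $F$. Partition $F = F_1 \sqcup F_2$ according to the orientation of the $K_{s,t}$-copy completed by each edge in the given saturation ordering. A standard reordering trick (insert all of $F_{3-i}$ before the edges of $F_i$, keeping the original relative order within $F_i$) shows that $G \cup F_{3-i}$ is weakly $(K_{n,n},K_{s,t})$-bisaturated in orientation $i$, so Alon's theorem gives $|F_i| \le (n-s+1)(n-t+1)$ for $i=1,2$. This alone yields only $|F| \le 2(n-s+1)(n-t+1)$, which is too weak by a term of order $(n-t+1)(t-s)$.

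Closing this gap is the main obstacle, and it must exploit the fact that $G$ sits inside both bisaturated graphs simultaneously, so the structures of $F_1$ and $F_2$ are correlated rather than independent. I would revisit Alon's linear-algebraic proof of the bisaturation bound, which encodes edges as vectors in a quotient space so that every bisaturated graph's edges span the quotient. Running the two orientations inside a single combined ambient space, the two families of $K_{s,t}$-relations should share a common image of dimension exactly $(t-s)^2$, reducing the joint codimension from $2(n-s+1)(n-t+1)$ to $(n-s+1)^2 - (t-s)^2$ and forcing $|E(G)| \ge n^2 - (n-s+1)^2 + (t-s)^2$. Identifying this $(t-s)^2$-dimensional common-image subspace explicitly, and verifying that the edges of any weakly saturated $G$ span the combined quotient, is where I expect the real work to lie.
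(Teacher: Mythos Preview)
Your upper-bound construction is correct and, after relabelling ($A\setminus A_1\leftrightarrow X_1$, $A_1\setminus A_2\leftrightarrow X_2$, $A_2\leftrightarrow X_3$, and symmetrically on the $B$ side), coincides with the graph the paper uses.

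The lower bound, however, has a genuine gap. Your orientation-split argument correctly gives $|F_i|\le(n-s+1)(n-t+1)$ for each $i$, but the resulting bound $|F|\le 2(n-s+1)(n-t+1)$ exceeds the target $(n-s+1)^2-(t-s)^2$ by exactly $(n-t+1)^2$, not by a term of order $(n-t+1)(t-s)$ as you write. Consequently your proposed fix --- a ``common image of dimension exactly $(t-s)^2$'' shared by the two orientations in Alon's algebraic setup --- cannot close the gap even if it could be made precise: you would need an overlap of dimension $(n-t+1)^2$, which grows with $n$ and is not a fixed correction depending only on $s$ and $t$. As written, the lower bound is not established.

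The paper's argument (following Moshkovitz--Shapira, presented in the appendix for general $K_{\ell,m}$) sidesteps the orientation issue by reducing to the balanced case $K_{t,t}$, where bisaturation and weak saturation coincide. Given any weakly $(K_{n,n},K_{s,t})$-saturated $G$, adjoin $t-s$ new vertices to each side, each new vertex joined to every vertex of the opposite original side. The resulting graph $G'$ on parts of size $n+t-s$ is weakly $(K_{n+t-s,n+t-s},K_{t,t})$-saturated: each old non-edge $e_i$ completes a $K_{t,t}$ by enlarging the short side of its $K_{s,t}$-witness $C_i$ with the $t-s$ new vertices on that side, and the new-to-new non-edges are handled last. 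Alon's theorem then gives $|E(G')|\ge (n+t-s)^2-(n-s+1)^2$; subtracting the $2n(t-s)$ added edges yields $|E(G)|\ge n^2-(n-s+1)^2+(t-s)^2$.
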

We would like to note that the main contribution in \cite{MoshShapira} is studying an analogous process in multipartite hypergraphs. In doing so, they also prove a very beautiful two-families type theorem.

The proof of Theorem~\ref{thm:MS} can be easily generalised to determine $\wsat(K_{\ell,m},K_{s,t})$.
\begin{theorem}\label{thm:wsat-imp}
Let $2 \le s \le \ell, t \le m$. Then
$$\wsat(K_{\ell,m},K_{s,t})=(m+\ell-s+1)(s-1)+(t-s)^2.$$
\end{theorem}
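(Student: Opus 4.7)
The plan is to prove matching upper and lower bounds. For concreteness I assume $\ell \geq t$; the case $\ell < t$ is simpler since only one orientation of $K_{s,t}$ is possible and the argument specializes directly.

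\emph{Upper bound.} I construct a weakly $(K_{\ell,m},K_{s,t})$-saturated graph with the claimed number of edges. Partition $A = A' \cup A_0 \cup A''$ with $|A'|=s-1$ and $|A_0|=t-s$, and partition $B = B' \cup B_0 \cup B''$ analogously. Let $G$ have edge set $(A' \times B) \cup (A \times B') \cup (A_0 \times B_0)$; a direct count gives $(s-1)(m + \ell - s + 1) + (t-s)^2$ edges. Every vertex outside $A'$ has at most $(s-1) + (t-s) = t - 1$ neighbours in $B$ (and symmetrically), so $G$ is $K_{s,t}$-free. I then add the missing edges in three phases: (i) edges in $A_0 \times B''$, (ii) edges in $A'' \times B_0$, (iii) edges in $A'' \times B''$. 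An edge $\{a, b\}$ in phase (i) completes a $K_{s,t}$ on $A' \cup \{a\}$ and $B' \cup B_0 \cup \{b\}$; phase (ii) is symmetric; and phase (iii) uses the edges added in the earlier phases to supply the missing connections from $a$ to $B_0$.

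\emph{Lower bound.} I reduce to the case of $K_{t,t}$-saturation, for which Alon's theorem yields $\wsat(K_{\ell',m'},K_{t,t}) = \ell' m' - (\ell'-t+1)(m'-t+1)$ (bisaturation coincides with weak saturation because both parts of $K_{t,t}$ have the same size). Given a weakly $(K_{\ell,m},K_{s,t})$-saturated graph $G$, I introduce disjoint sets $A^*$ and $B^*$ of size $t - s$ each and form
\[
\tilde G := G \cup (A^* \times B) \cup (A \times B^*),
\]
deliberately omitting the edges in $A^* \times B^*$. The graph $\tilde G$ is $K_{t,t}$-free: in any hypothetical $K_{t,t}$ on $X \cup Y$, the absence of $A^* \times B^*$-edges forces $X \cap A^* = \emptyset$ or $Y \cap B^* = \emptyset$, and either case yields a $K_{s,t}$-copy in $G$ (in one of the two orientations). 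It is also weakly $(K_{\ell + t - s,\, m + t - s}, K_{t,t})$-saturated: running the given $G$-saturation, each new $K_{s,t}$-copy extends to a $K_{t,t}$ in $\tilde G$ by appending $A^*$ or $B^*$ according to orientation, and once all of $A \times B$ is present each remaining edge $\{a^*, b^*\} \in A^* \times B^*$ completes a $K_{t,t}$ on $\{a_1, \ldots, a_{t-1}, a^*\} \cup \{b_1, \ldots, b_{t-1}, b^*\}$.

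Applying Alon's $K_{t,t}$-bound to $\tilde G$ and subtracting the $(t - s)(\ell + m)$ augmentation edges,
\[
|E(G)| \geq (\ell + t - s)(m + t - s) - (\ell - s + 1)(m - s + 1) - (t - s)(\ell + m) = (s-1)(\ell + m - s + 1) + (t - s)^2,
\]
matching the upper bound. The main obstacle is the design of $\tilde G$: omitting $A^* \times B^*$ is what simultaneously enforces $K_{t,t}$-freeness from $K_{s,t}$-freeness and produces the correct $(t-s)^2 - (s-1)^2$ correction term in the edge count, while the symmetric inclusion of both $A^*$ and $B^*$ is essential to accommodate $K_{s,t}$-copies of either orientation during the saturation.
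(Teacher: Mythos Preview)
Your proof is correct and follows essentially the same route as the paper: the upper-bound construction with the three blocks $A',A_0,A''$ (and their $B$-counterparts) is exactly the paper's $X_1,X_2,X_3$ construction, and your lower bound via augmenting by two $(t-s)$-sets and reducing to the $K_{t,t}$ case is precisely the paper's argument. You are in fact slightly more careful than the paper in that you explicitly verify $K_{t,t}$-freeness of $\tilde G$ and explain why both $A^*$ and $B^*$ are needed to handle the two orientations; the only caveat is that your remark that the case $\ell<t$ ``specializes directly'' glosses over the fact that the upper-bound construction already requires $\ell\ge t-1$, so (as in the paper) the argument as written really lives in the regime $\ell\ge t-1$.
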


For completeness, we include the full argument in \Cref{appen:BipInBip}.  
Observe that when $\ell + m = n$, Theorem~\ref{thm:wsat-imp} implies that $\wsat(K_{\ell,m},K_{t,t})=(t-1)(n+1-t)$. We note that this, along with Theorem~\ref{thm:KttWsat} gives the following relationship between weak saturation numbers of complete balanced bipartite graphs in the clique and in complete bipartite graphs.
\begin{corollary}\label{cor:rel}
For $t \ge 2$, $n \ge 3t-3$ and $\ell,m \ge 2$ such that $\ell + m = n$, we have
$$\wsat(n,K_{t,t}) = \wsat(K_{\ell,m},K_{t,t}) + \binom{t}{2}.$$
\end{corollary}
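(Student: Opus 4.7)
The statement is a direct consequence of the two exact formulas already in hand, so the plan is essentially a one-line arithmetic verification. First, I would apply Theorem~\ref{thm:KttWsat}, which gives
$$\wsat(n,K_{t,t}) = (t-1)\left(n+1-\tfrac{t}{2}\right)$$
under the hypothesis $n\ge 3t-3$.

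Next, I would specialise Theorem~\ref{thm:wsat-imp} to the case $s=t$, with $\ell,m\ge 2$ and $\ell+m=n$. Note that the hypothesis of Theorem~\ref{thm:wsat-imp} requires $\ell,m\ge t$, which needs to be checked (or at least acknowledged); assuming it, the $(t-s)^2$ term vanishes and we obtain
$$\wsat(K_{\ell,m},K_{t,t}) = (\ell+m-t+1)(t-1) = (n-t+1)(t-1).$$

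Finally I would subtract:
$$\wsat(n,K_{t,t}) - \wsat(K_{\ell,m},K_{t,t}) = (t-1)\Bigl[(n+1-t/2) - (n-t+1)\Bigr] = (t-1)\cdot\frac{t}{2} = \binom{t}{2},$$
which is the claimed identity. The only real subtlety is the range of $\ell$ and $m$: Theorem~\ref{thm:wsat-imp} is stated for $\ell,m\ge t$, whereas the corollary allows $\ell,m\ge 2$. So the ``main obstacle'' (if any) is to justify the formula $\wsat(K_{\ell,m},K_{t,t})=(n-t+1)(t-1)$ in the degenerate regime where $\min(\ell,m)<t$; in that case $K_{\ell,m}$ contains no $K_{t,t}$ at all, so the weakly saturated graph is $K_{\ell,m}$ itself with $\ell m$ edges, and one should check that this value still fits the corollary's identity (or, more likely, restrict to $\ell,m\ge t$ implicitly as the relevant case). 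Apart from this bookkeeping, the corollary is immediate.
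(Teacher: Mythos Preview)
Your approach is exactly the paper's: the paper derives the corollary by observing (just before stating it) that Theorem~\ref{thm:wsat-imp} with $s=t$ gives $\wsat(K_{\ell,m},K_{t,t})=(t-1)(n+1-t)$, and then combining this with Theorem~\ref{thm:KttWsat} yields the identity via the same subtraction you carry out.

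Your flag about the hypotheses is well taken and in fact sharper than the paper's treatment. Theorem~\ref{thm:wsat-imp} genuinely requires $\ell,m\ge t$, and the paper simply applies it without comment; when $\min(\ell,m)<t$ the ambient graph $K_{\ell,m}$ contains no $K_{t,t}$, so $\wsat(K_{\ell,m},K_{t,t})=\ell m$, and then the identity fails in general (e.g.\ $t=4$, $\ell=2$, $m=10$ gives $\ell m+\binom{t}{2}=26\neq 33=\wsat(12,K_{4,4})$). So the corollary should really be read with $\ell,m\ge t$, and your instinct to restrict to that case is correct.
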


In particular, taking a construction of minimum size for  $\wsat(K_{t,n-t},K_{t,t})$ (see \Cref{appen:BipInBip}) and adding a copy of $K_t$ in the larger part of the graph, will give a minimum example for $\wsat(n,K_{t,t})$.
This argument will give an upper bound also for the unbalanced case, that is, the upper bound $\wsat(n,K_{s,t})\leq \wsat(K_{\ell,m},K_{s,t})+\binom t2$ is always true for $m+\ell=n$, however it is tight only for the balanced case. Indeed, for $s<t$, the proof of \Cref{thm:genst} (more specifically, \Cref{prop:KstWsatUpper}) will give a better upper bound.
%

Weak saturation numbers have been also studied in a large number of other settings. Amongst others are the weak saturation numbers for complete multipartite graphs and hypergraphs \cite{AlonWeak1985}, asymptotics of the weak saturation number of hypergraphs \cite{Tuza92}, the weak saturation number for families
of hypergraphs with a fixed number of edges \cite{EFT91,PikhurkoWeakSatHypergraphs1,Tuza88}, complete bipartite hypergraphs (in complete bipartite hypergraphs) \cite{BBMR,MoshShapira}, pyramids in hypergraphs \cite{PikhurkoWeakSatHypergraphs}, families of graphs in the complete graph \cite{Sem97,BS02,Sid07}, families of disjoint copies of graphs \cite{FaudreeWsatMultipleCopies}, and the case that $H$ is the hypercube or the grid \cite{BBMR,BaloghPete1998,MorrisonNoelHypercube}. For a short survey see \cite[Section~10]{FaudreeSurvey}.\\

The paper is organized as follows. In Section~\ref{sec:pre} we introduce an important tool that will be used to prove the lower bound in Theorem~\ref{thm:KttWsat}, which is then proved in Section~\ref{sec:Ktt}. Theorem~\ref{thm:genst} is proved in Section~\ref{sec:Kst}. We conclude in Section~\ref{sec:con} by discussing some generalisations and interesting open problems.\\

\textbf{Note added after publication:} It was recently brought to our attention that  \Cref{thm:KttWsat} was independently proved in 1985 by Kalai~\cite[Theorem 9.1]{KalaiWeakSat1985} using matroids.

\section{Preliminaries}\label{sec:pre}
In order to exactly determine a particular weak saturation number, a common strategy is to prove matching upper and lower bounds (often in very different ways). To prove an upper bound of $M$, it suffices to find a construction of a graph with $M$ edges that is weakly $(F,H)$-saturated. Indeed, this is precisely what we do in the proof of Theorem~\ref{thm:KttWsat} (see Lemma~\ref{lem:upp}). However, in order to prove a lower bound of $M$, we must show that \emph{no} graph on $M-1$ edges can be weakly $(F,H)$-saturated. 

In order to do this we will utilise the following lemma.

\begin{lemma}[Balogh, Bollob\'as, Morris and Riordan~\cite{BBMR}]\label{LinearAlgebraBootstrapPer}
Let $F$ and $H$ be graphs and let $W$ be a vector space. Suppose that there exists a set
$\{f_e : e \in E(F)\} \subseteq W$ such that for every copy $H'$ of $H$ in $F$ there are non-zero scalars $\{c_{e,H'} : e \in E(H')\}$
such that $\sum_{e \in E(H')} c_{e,H'}f_e = 0$. Then
$$\wsat(F, H) \ge \vdim (\vspan \{f_e : e \in E(F)\}).$$
\end{lemma}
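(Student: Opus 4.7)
The plan is to show that for any weakly $(F,H)$-saturated graph $G$, the vectors $\{f_g : g \in E(G)\}$ already span the full subspace $\vspan\{f_e : e \in E(F)\}$, from which the desired inequality $|E(G)| \ge \vdim(\vspan\{f_e : e \in E(F)\})$ follows immediately by taking $G$ of minimum size. So the whole argument is really an induction that tracks what the spanning set looks like as the weak saturation process proceeds.

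Concretely, fix a minimum weakly $(F,H)$-saturated graph $G$ together with the guaranteed ordering $e_1,\ldots,e_t$ of $E(F)\setminus E(G)$, so that for each $i$ there is a copy $H_i$ of $H$ in $G\cup\{e_1,\ldots,e_i\}$ with $e_i\in E(H_i)$. The key observation is that every edge of $H_i$ other than $e_i$ lies in $E(G)\cup\{e_1,\ldots,e_{i-1}\}$. I would then invoke the hypothesis on this copy: there exist non-zero scalars $\{c_{e,H_i}\}$ with
\[
\sum_{e\in E(H_i)} c_{e,H_i}\, f_e \;=\; \zv.
\]
Since $c_{e_i,H_i}\neq 0$, I can solve for $f_{e_i}$ as a linear combination of the $f_e$ with $e\in E(H_i)\setminus\{e_i\}\subseteq E(G)\cup\{e_1,\ldots,e_{i-1}\}$.

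A straightforward induction on $i$ then shows that each $f_{e_i}$ lies in $\vspan\{f_g : g\in E(G)\}$: the base case $i=1$ is the identity above (all edges of $H_1$ other than $e_1$ belong to $E(G)$), and the inductive step uses that $f_{e_1},\ldots,f_{e_{i-1}}$ have already been shown to lie in this span. Once every $f_{e_i}$ lies in $\vspan\{f_g : g\in E(G)\}$, the set $\{f_g : g\in E(G)\}$ spans $\{f_e : e\in E(F)\}$, giving $|E(G)|\ge \vdim(\vspan\{f_e : e\in E(F)\})$.

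There is no real obstacle here, since the statement is essentially a linear-algebraic repackaging of the weak saturation process; the only thing to be careful about is that the constant $c_{e_i,H_i}$ on the newly added edge is non-zero, which is exactly the hypothesis we were handed. The power of the lemma, rather than its proof, is in choosing the vectors $f_e$ cleverly so that the lower bound $\vdim(\vspan\{f_e : e \in E(F)\})$ matches the target, and that is where the subsequent applications (in the proof of \Cref{thm:KttWsat}) will have to do the real work.
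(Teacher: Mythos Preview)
Your proof is correct and follows essentially the same approach as the paper: both arguments use the non-zero scalar $c_{e_i,H_i}$ to express $f_{e_i}$ in the span of $\{f_e : e\in E(H_i)\setminus\{e_i\}\}\subseteq\vspan\{f_e : e\in E(G)\cup\{e_1,\ldots,e_{i-1}\}\}$, and then conclude (by induction in your version, by a chain of span equalities in the paper) that $\{f_g : g\in E(G)\}$ spans the whole space. Your closing remark about the real work lying in the choice of the vectors $f_e$ is also exactly the point the paper makes immediately after its proof.
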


The proof of Lemma~\ref{LinearAlgebraBootstrapPer} is short and beautiful, and so we include it here for the reader's enjoyment.

\begin{proof}
Let $F_0$ be any weakly $(F,H)$-saturated graph. Let $e_1,\ldots,e_m$ be an ordering of $E(F)\setminus E(F_0)$ such that for each $i \in [m]$, the graph $F_i:= F_0 \cup \{e_1,\ldots,e_i\}$ contains a copy $H_i$ of $H$, such that $e_i \in H_i$. Now, by hypothesis, for each $i \in [m]$, we have
$$f_{e_i} \in \vspan \{f_e: e \in E(H_i)\setminus \{e_i\}\}.$$ 

As $H_i \subseteq  F_0 \cup \{e_1,\ldots,e_i\}$, this implies that, for each $i$,
$$f_{e_i} \in \vspan\{f_e: e \in E(F_0) \cup \{e_1,\ldots,e_{i-1}\}\}.$$

So, for all $i \in [m]$ we have
$$\vspan\{f_{e} : e \in E(F_{i-1})\} = \vspan\{f_{e} : e  \in E(F_{i})\}.$$
And so
\begin{align*}
    |E(F_0)| \ge \dim (\vspan \{f_e: e \in E(F_0)\}) = \dim(\vspan\{f_e: e \in E(F_m)\}) = \dim(\vspan\{f_e: e \in F\}),
\end{align*}
as required.
\end{proof}
We would like to briefly remark that the condition in the lemma of assigning vectors to edges in such a way that those on copies of $H$ satisfy a particular dependence is not in itself difficult to satisfy (for example, just put the same vector on every edge). However, doing this would result in a terrible lower bound on $\wsat(F,H)$ (as $\dim(\vspan\{f_e:e \in E(F)\} = 1)$). So the difficulty in applying this lemma lies in finding vectors that both satisfy the dependence condition \emph{and} have a large span. 

This lemma is very powerful, as it turns the problem of finding a lower bound for a weak saturation number into a \emph{constructive} problem. Indeed, one need only find a suitable vector space and assign certain vectors to the edges of $F$ to obtain the bound. In Lemma~\ref{lem:low}, we present a collection of vectors such that the vectors assigned to each copy of $K_{t,t}$ satisfy the required dependence property. We then show that the vectors assigned to a copy of our upper bound construction are linearly independent (and hence the lower bound matches the upper).

A more general version of Lemma~\ref{LinearAlgebraBootstrapPer} was originally used by Balogh, Bollob\'{a}s, Morris and Riordan~\cite{BBMR} in the study of a bootstrap process on hypergraphs. It has since been used to determine exact weak saturation numbers (see for example~\cite{MorrisonNoelHypergraph,MorrisonNoelHypercube}). We remark that in all these cases, and in the case of Theorem~\ref{thm:KttWsat} here, no purely combinatorial proof is known that gives any of the lower bounds that are proved via Lemma~\ref{LinearAlgebraBootstrapPer}. This is a common phenomenon in this area, where the literature is full of proofs via algebraic techniques for which no combinatorial proof is known. It would be very interesting to see a purely combinatorial proof of any of these weak saturation results.

\section{Proof of Theorem~\ref{thm:KttWsat}}\label{sec:Ktt}
We will prove Theorem~\ref{thm:KttWsat} in two steps. First in Lemma~\ref{lem:upp} we will prove the upper bound by exhibiting a weakly $(K_n,K_{t,t})$-saturated graph with $(t-1)(n+1 - t/2)$ edges. Then in Lemma~\ref{lem:low} we prove a matching lower bound by applying Lemma~\ref{LinearAlgebraBootstrapPer}. 

Let $G$ be a graph.  For disjoint vertex sets $X, Y \subseteq V(G)$, we denote by $G[X,Y]$ the bipartite graph induced by the edges of $G$ with one endpoint in $X$ and the other in $Y$.
\begin{const}\label{const:ktt}
Let $X$, $Y$ and $Z$ be disjoint subsets of $[n]$ of cardinality $t$, $t-1$ and $n-2t+1$, respectively. Define $G_n$ to be the graph on vertex set $X \cup Y \cup Z$ and $uv$ is an edge of $G_n$ if and only if either $u \in X \cup Z$ and $v \in Y$, or $u,v \in X$. That is, $X$ is a clique on $t$ vertices, $Y$ and $Z$ are both independent sets, and $G_n[X\cup Z,Y]$ is a complete bipartite graph.
\end{const}
See Figure~\ref{fig:Gn} for an illustration. 
Observe that $G_n$ has $(t-1)(n + 1 - t/2)$ edges. To prove the upper bound we will show that $G_n$ is weakly $(K_n,K_{t,t})$-saturated.
\begin{figure}[htbp]
\centering
\includegraphics[width=0.4\textwidth]{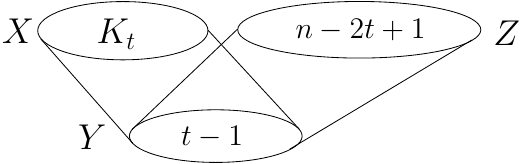}
\caption{The graph $G_n$. The sets $Y$ and $Z$ are independent sets of cardinality $t-1$ and $n-2t+1$, respectively. All edges between $X \cup Z$ and $Y$ are present. $X$ is a clique on $t$ vertices.}
\label{fig:Gn}
\end{figure}

\begin{lemma}\label{lem:upp} Let $t \ge 2$ and $n \ge 3t-3$.  Then
$\wsat(K_n, K_{t,t}) \le (t-1)(n + 1 - t/2).$
\end{lemma}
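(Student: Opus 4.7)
The plan is to verify that $G_n$ from \Cref{const:ktt} is itself weakly $(K_n,K_{t,t})$-saturated. A direct count gives $|E(G_n)| = \binom{t}{2} + t(t-1) + (n-2t+1)(t-1) = (t-1)(n+1-t/2)$, matching the claimed bound. To check that $G_n$ is $K_{t,t}$-free, suppose for contradiction that parts $A,B$ of size $t$ span a $K_{t,t}$ in $G_n$: a vertex of $A\cap Z$ would force $B\subseteq Y$ (since $Z$-vertices have neighbors only in $Y$), contradicting $|Y|=t-1$; and if $A\cup B\subseteq X\cup Y$, then the independence of $Y$ forces one part to lie in $X$ and hence equal $X$, leaving the other part inside $Y$, again impossible.

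To exhibit the saturating ordering, I would split $E(K_n)\setminus E(G_n)$ into three blocks corresponding to the three kinds of missing edges, processed in this order: the $X$-$Z$ pairs, then the pairs inside $Y$, then the pairs inside $Z$.

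When an $X$-$Z$ pair $x_iz_j$ is added, the $t$-sets $A=Y\cup\{x_i\}$ and $B=(X\setminus\{x_i\})\cup\{z_j\}$ span a $K_{t,t}$: the required edges other than $x_iz_j$ are the clique edges within $X$ and the bipartite edges of $G_n$ between $Y$ and $X\cup Z$. When a pair $y_iy_j\subseteq Y$ is added, one picks any $x_k\in X$ and any $(t-2)$-subset $Z'\subseteq Z$, and checks that $A=\{y_i,x_k\}\cup Z'$ and $B=\{y_j\}\cup(X\setminus\{x_k\})$ span a $K_{t,t}$; the $X$-to-$Z'$ edges needed here were installed in the first phase. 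Finally, when a pair $z_iz_j\subseteq Z$ is added, picking any $x_k\in X$ and taking $A=\{z_i\}\cup Y$ and $B=\{z_j\}\cup(X\setminus\{x_k\})$ yields a $K_{t,t}$, using only edges of $G_n$ together with the $X$-$Z$ edges from the first phase.

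The only place the hypothesis $n\geq 3t-3$ actually bites is the middle phase, where one needs $|Z|=n-2t+1\geq t-2$ to extract the required $(t-2)$-subset $Z'$. Beyond spotting the three $K_{t,t}$-patterns above, the verification that each newly added edge sits inside the exhibited $K_{t,t}$ is routine.
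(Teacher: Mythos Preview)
Your proof is correct and follows essentially the same approach as the paper: both use the graph $G_n$ from Construction~\ref{const:ktt} and verify weak saturation by exhibiting explicit $K_{t,t}$-witnesses for each missing edge. The only cosmetic differences are that you process the $Y$--$Y$ edges before the $Z$--$Z$ edges (the paper does the reverse, with correspondingly different witnesses) and that you include the $K_{t,t}$-freeness verification, which the paper leaves implicit.
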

\begin{proof}
We will show that $G_n$ is weakly $(K_n,K_{t,t})$-saturated by adding the edges of $E(K_n) \setminus E(G_n)$ in such a way that the addition of each edge $e$ creates a copy of $K_{t,t}$ containing $e$.

Let $X,Y,Z$ be as defined above. We are first able to add all the edges between $X$ and $Z$ (in any order). Indeed, consider $e = xz$ for some $x \in X$ and $z \in Z$. Observe that $G_n[Y \cup \{x\}, X \setminus\{x\}\cup \{z\}]$ is a copy of $K_{t,t}\setminus \{e\}$ and hence $e$ can be added. 

We next show that we can add all edges within $Z$. Let $e = z_1z_2$, where $z_1, z_2 \in Z$ and let $X'$ be a subset of $X$ of size $t-1$. Observe that $G_n[Y \cup \{z_1\}, X' \cup \{z_2\}]$ is a copy of $K_{t,t}\setminus \{e\}$. Hence $e$ can be added. 

It remains to show that any edge $e = y_1y_2$, where $y_1,y_2 \in Y$ can be added. As all edges outside of $Y$ have been added and $n - t +1 \geq 2t -2$, any subset $S$ of $X \cup Z$ of cardinality $2t-2$ along with $y_1$ and $y_2$ contains a copy of $K_{t,t}\setminus \{e\}$. Hence every edge in $Y$ can be added. This completes the proof.
\end{proof}

Proving the lower bound is much more involved. Our strategy is to apply  Lemma~\ref{LinearAlgebraBootstrapPer}. We will construct a family of vectors $\{f_e: e \in E(K_n)\}$ such that: (1) for any copy $H$ of $K_{t,t}$ in $K_n$, the vectors $\{f_e: e \in E(H)\}$ have a non-trivial dependence; (2) the subset of vectors $\{f_e:e \in E(G_n)\}$ are linearly independent.

\begin{lemma}\label{lem:low}
Let $t \ge 2$ and $n \ge 3t-3$.  Then
$\wsat(K_n, K_{t,t}) \ge (t-1)(n + 1 - t/2).$
\end{lemma}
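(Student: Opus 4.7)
The plan is to apply \Cref{LinearAlgebraBootstrapPer} with a carefully chosen family of vectors. I would assign to each vertex $v \in V(K_n)$ a vector $\vb_v \in \mathbb{R}^{t-1}$ in sufficiently general position---for instance via the moment curve $\vb_v = (1, v, v^2, \ldots, v^{t-2})$---so that any $t-1$ of the $\vb_v$'s form a basis of $\mathbb{R}^{t-1}$, and any $t$ of them admit a unique-up-to-scaling linear dependence with all coefficients nonzero. For each edge $e = uv$ of $K_n$, I then define $f_e \in \mathbb{R}^{n(t-1)}$, viewed as $n$ blocks of size $t-1$ indexed by the vertices of $K_n$, to be the vector whose $u$-block equals $\vb_v$, whose $v$-block equals $\vb_u$, and whose other blocks vanish.

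To verify the dependence hypothesis of \Cref{LinearAlgebraBootstrapPer}, fix any copy $H$ of $K_{t,t}$ with parts $A = \{a_1, \ldots, a_t\}$ and $B = \{b_1, \ldots, b_t\}$. By the genericity assumption there exist scalars $\alpha_i, \beta_j \neq 0$ with $\sum_i \alpha_i \vb_{a_i} = 0$ and $\sum_j \beta_j \vb_{b_j} = 0$. Setting $c_{ij} := \alpha_i \beta_j$ yields nonzero coefficients, and a block-by-block computation confirms $\sum_{i,j} c_{ij} f_{a_i b_j} = 0$: the $a_i$-block of this sum equals $\alpha_i \sum_j \beta_j \vb_{b_j} = 0$, and symmetrically for each $b_j$-block.

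It remains to show that the family $\{f_e : e \in E(G_n)\}$ is linearly independent, whence $\vdim \vspan\{f_e : e \in E(K_n)\} \ge |E(G_n)| = (t-1)(n+1-t/2)$, giving the desired lower bound. Suppose $\sum_e \lambda_e f_e = 0$; reading off the $v$-block yields, for each vertex $v$, the equation $\sum_{u \in N_{G_n}(v)} \lambda_{uv} \vb_u = 0$. For $v = z \in Z$, the only edges in $G_n$ at $z$ go to $Y$, and $|Y| = t-1$, so $\{\vb_y\}_{y \in Y}$ is a basis and all $\lambda_{zy}$ vanish. For $v = y \in Y$, the equation then reduces to $\sum_{x \in X} \lambda_{yx} \vb_x = 0$; since $|X| = t$ and the unique dependence of $\{\vb_x\}_{x \in X}$ is $\sum_x \alpha_x \vb_x = 0$ with every $\alpha_x \neq 0$, we must have $\lambda_{yx} = c_y \alpha_x$ for some scalar $c_y$.

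The hard part will be analyzing the $X$-blocks. For each $x \in X$, the corresponding equation rearranges to $\sum_{x' \in X \setminus \{x\}} \lambda_{xx'} \vb_{x'} = -\alpha_x w$, where $w := \sum_{y \in Y} c_y \vb_y$. Writing $w = \sum_{x'' \in X} \gamma_{x''} \vb_{x''}$ (non-uniquely, since the $\vb_x$'s overspan $\mathbb{R}^{t-1}$) and substituting $\alpha_x \vb_x = -\sum_{x' \neq x} \alpha_{x'} \vb_{x'}$ produces the identity $\lambda_{xx'} = \gamma_x \alpha_{x'} - \alpha_x \gamma_{x'}$. The symmetry $\lambda_{xx'} = \lambda_{x'x}$ then forces $\gamma_x/\alpha_x$ to be independent of $x$, so $\gamma_x = s\alpha_x$ for a common scalar $s$; plugging back gives $w = s \sum_x \alpha_x \vb_x = 0$. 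Since $\{\vb_y\}_{y \in Y}$ is a basis, this forces $c_y = 0$ for every $y \in Y$, whereupon $\lambda_{xx'} = 0$ and all $\lambda_e$ vanish, completing the argument.
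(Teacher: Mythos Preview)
Your proof is correct and follows essentially the same route as the paper's: the same vector assignment $f_e \in \bigoplus_v \mathbb{R}^{t-1}$ with $\vb_v$'s in general position, the same tensor-type dependence $c_{ij}=\alpha_i\beta_j$ for the $K_{t,t}$ hypothesis, and linear independence verified on $E(G_n)$ block by block. Your treatment of the $X$-blocks is in fact a bit cleaner than the paper's contradiction argument: by fixing one expansion $w=\sum_{x}\gamma_x\vb_x$ and reading off $\lambda_{xx'}=\gamma_x\alpha_{x'}-\alpha_x\gamma_{x'}$, the edge-symmetry $\lambda_{xx'}=\lambda_{x'x}$ immediately forces $\gamma_x/\alpha_x$ constant (hence $w=0$), whereas the paper achieves the same conclusion via the auxiliary combination $\pi_{x_1}(\Sigma)/c_{y^*x_1}-\pi_{x_2}(\Sigma)/c_{y^*x_2}$ and a sign comparison.
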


\begin{proof}
 Let $W$ be a vector space that is the direct sum of $n$ copies of $\mathbb{R}^{t-1}$, one for each vertex of $K_n$. That is,
$$W := \bigoplus_{v \in K_n}\mathbb{R}^{t-1}.$$
For each edge $e \in K_n$ we will associate a vector $f_e \in W$, in such a way that the hypotheses of Lemma~\ref{LinearAlgebraBootstrapPer} are satisfied.

For $v \in K_n$ and $w \in W$, let $\pi_v: W \rightarrow \mathbb{R}^{t-1}$, denote the projection of $w$ to the copy of $\mathbb{R}^{t-1}$ corresponding to $v$.
Let $U = \{\vb_v: v \in K_n\} \subseteq \mathbb{R}^{t-1}$ be a family of $n$ vectors in general position. That is, any $t-1$ vectors of $U$ are linearly independent (and hence any $t$ vectors have a unique dependence, up to scaling by a constant factor). Existence of such a family can be seen by picking $n$ random vectors from $\mathbb{R}^{t-1}$ and observing that, with high probability, any subset of size $t-1$ is independent.

To an edge $e = xy \in E(K_n)$ we will associate the vector $f_e \in \mathbb{R}^{n(t-1)}$, defined such that $\pi_x(f_e) = \vb_y$, $\pi_y(f_e) = \vb_x$ and $\pi_v(f_e) = 0^{t-1}$ (where we use this notation to represent the $(t-1)$-dimensional all zero vector), for all $v \in V(K_n) \setminus \{x,y\}$. Let $E \subseteq E(K_n)$ and $\{c_e : e \in E\}$ be a set of non-zero scalars. 
Observe that  
\begin{equation}\label{obs:coo}\sum_{e \in E} c_ef_e = 0 \hspace{0.2cm} \text{  if and only if  } \hspace{0.2cm} \pi_v\left( \sum_{e \in E} c_{e} f_e \right) =  \sum_{e \in E} c_{e}\pi_v(f_{e}) = 0, \text{ for every } v \in K_n. \end{equation}

Let us now affirm that the family $\{f_e: e \in E(K_n)\}$ satisfies the hypotheses of Lemma~\ref{LinearAlgebraBootstrapPer}.

\begin{claim}\label{cl:dep}
For every copy $H$ of $K_{t,t}$ in $K_n$, there exist non-zero scalars $\{c_{e,H} : e \in E(H)\}$ such that $\sum_{e \in E(H)} c_{e,H}f_e = 0$.
\end{claim}
\begin{proof}
For simplicity of notation, we will write $c_e$ for $c_{e,H}$. Assume without loss of generality that $V(H) = (\{v_1, \ldots, v_t\},\{ w_1, \ldots, w_t\})$. As $U$ is a family of vectors in general position, there exist non-zero scalars $\alpha_1 \ldots \alpha_t$ such that $\sum_{i= 1}^t \alpha_i \vb_{v_i} = 0$. Similarly, let  $\beta_1 \ldots \beta_t$ be non-zero scalars such that $\sum_{i= 1}^t \beta_i \vb_{w_i} = 0$. For $e = v_iw_j \in H$, define $c_{e} := \alpha_i\beta_j$. We will show that
$\sum_{e \in E(H)} c_{e}f_{e} = 0$. 

By \eqref{obs:coo} it suffices to show that, for each $v \in V(H)$, we have $\sum_{e \in E(H)} c_e \pi_v(f_e) = 0$. Without loss of generality, consider $v_i \in V(H)$. We have
$$\sum_{e \in E(H)} c_e \pi_{v_i}(f_e) = \sum_{j=1}^t \alpha_i\beta_j\vb_{w_j} = \alpha_i \sum_{j=1}^t \beta_j\vb_{w_j} = 0,$$
by choice of the scalars $\beta_1,\ldots, \beta_t$. This concludes the proof of the claim.
\end{proof}

We will now bound the dimension of the space spanned by the vectors of $\{f_e: e \in E(K_n)\}$.

\begin{claim}\label{cl:dim}
$\vdim(\vspan(\{f_e: e \in E(K_n)\}))\geq (t-1)(n + 1 - t/2)$.
\end{claim}

\begin{proof}
Recall the definition of the graph $G_n$ from Construction~\ref{const:ktt}. We will show that the family of vectors $\{f_e: e \in G_n\}$ are linearly independent. As $|E(G_n)| = (t-1)(n + 1 - t/2)$, this will prove the claim.

Let us suppose that $\Sigma := \sum_{e \in E(G_n)} c_ef_e = 0$. We will show that $c_e = 0$ for every $e \in E(G_n)$. Recall that $V(G_n) = X \cup Y \cup Z$.
First, for $z \in Z$ consider $\pi_z(\Sigma)$. Using \eqref{obs:coo}, we have
$$\pi_z\left(\sum_{e \in E(G_n)} c_ef_e\right) = \sum_{e \in E(G_n)}c_e\pi_z(f_e) = \sum_{y \in Y}c_{yz}\vb_y = 0.$$
As $|Y| = t-1$ and since any $t-1$ vectors of $\vc$ are linearly independent, $c_{yz} = 0$ for all $y \in Y$ and $z \in Z$.

Now suppose, in order to obtain a contradiction, that there exists some $y^* \in Y$ and $x^* \in X$ such that $c_{y^*x^*} \not= 0$. Using \eqref{obs:coo}, for each $y \in Y$ we have \begin{equation}\label{eq:ysum}
0 = \sum_{e \in G_n} c_e \pi_y(f_e) = \sum_{x \in X}c_{yx}\vb_{x}.
\end{equation}
In this case, as $|X| = t$ and any $t$ vectors of $\vc$ are minimally dependent, using \eqref{eq:ysum} we obtain that 
\begin{equation}\label{eq:not0}
c_{y^*x} \not= 0,\hspace{0.2cm} \text{ for all } \hspace{0.2cm} x \in X.
\end{equation}
In addition, as the dependence of the vectors $\mathcal{X}:= \{\vb_x: x \in X\}$ is unique up to scaling by a constant factor, we obtain that for each $y \in Y \setminus \{y^*\}$, there exists $\gamma_{y} \in \mathbb{R}$ such that $c_{yx} = \gamma_{y}c_{y^*x}$, for all $x \in X$ (note that $\gamma_{y}$ may be equal to 0).

Now consider $\pi_x(\Sigma)$, for each $x \in X$. Expanding this out, we obtain 
$$\pi_x(\Sigma) = \sum_{y \in Y}c_{yx}\vb_y + \sum_{x' \in X\setminus \{x\}}c_{xx'}\vb_{x'} = c_{y^* x}\left(\sum_{y \in Y}\gamma_y \vb_y\right) + \sum_{x' \in X\setminus \{x\}}c_{xx'}\vb_{x'}.$$

Then for each $x_1, x_2 \in X$, we have
\begin{equation}\label{eq:ytake}
\frac{\pi_{x_1}(\Sigma)}{c_{y^* x_1}} - \frac{\pi_{x_2}(\Sigma)}{c_{y^*x_2}} =  c_{x_1x_2}\left( \frac{\vb_{x_2}}{c_{y^*  x_1}} - \frac{ \vb_{x_1}}{c_{y^* x_2}}\right) + \sum_{x \in X \setminus \{x_1,x_2\}} \left(\frac{c_{x_1x}}{c_{y^*x_1}} - \frac{c_{x_{2}x}}{c_{y^*x_{2}}}\right)\vb_{x} = 0,
\end{equation}
as by \eqref{obs:coo},  $\pi_x(\Sigma) = 0$, for all $x \in X$. The expression \eqref{eq:ytake} is a linear combination of vectors of $ \mathcal{X} \subseteq \vc$, which by definition are minimally dependent as $|\mathcal{X}| = t$. So this dependence is equal (up to a constant scaling factor) to the dependence between the vectors of $\mathcal{X}$ in \eqref{eq:ysum}. Hence there exists $\eta \in \mathbb{R}$ such that for all $x \in X$, the coefficient of $\vb_x$ in \eqref{eq:ytake} is equal to $\eta c_{y^*x}$. Therefore, by looking at the coefficients of $\vb_{x_1}$ and $\vb_{x_2}$, we obtain that
$$-\frac{c_{x_1x_2}}{c_{y^* x_2}} = \eta c_{y^* x_1} \hspace{0.2cm} \text{ and } \hspace{0.2cm} \frac{c_{x_1x_2}}{c_{y^* x_1}} = \eta c_{y^* x_2},$$
which implies that $c_{x_1x_2} = 0$ (as $c_{y^*x_1}c_{y^* x_2} \not= 0$, by \eqref{eq:not0}).

But now, for any $x \in X$, $\pi_x(\Sigma)$ is a linear combination of $t-1$ vectors of $U$, and hence $c_{xy} = 0$, for all $x \in X$, $y \in Y$ (using \eqref{obs:coo} and the fact that any $t-1$ vectors of $U$ are linearly independent). This contradicts our assumption that $c_{y^* x^*} \not= 0$. 

Hence $c_{xy}=0$ for all $x \in X$ and $y \in Y$ and it remains to show that $c_{xx'}=0$, for all $x,x' \in X$. But now, for any $x \in X$, $$\pi_x(\Sigma) = \sum_{x' \in X\setminus \{x\}}c_{xx'}\vb_{x'} = 0,$$
by \eqref{obs:coo}. As vectors in $U$ are in general position in $\mathbb{R}^{t-1}$, any $t-1$ are linearly independent and hence this expression can only hold if $c_{xx'}=0$ for all $x' \in X \setminus \{x\}$. 

This completes the proof that $\{f_e: e \in E(G_n)\}$ is a family of linearly independent vectors. Hence, $\vdim (\vspan\{f_e : e \in E(F)\}) \geq |E| = (t-1)(n + 1 - t/2)$.
\end{proof}
Given Claims~\ref{cl:dep} and \ref{cl:dim}, we may apply Lemma~\ref{LinearAlgebraBootstrapPer}. This completes the proof of the lemma.
\end{proof}

\begin{proof}[Proof of \Cref{thm:KttWsat}]
The theorem follows immediately from Lemmas~\ref{lem:upp} and \ref{lem:low}.
\end{proof}

\subsection{Determining $\wsat(n,K_{t,t+1})$}
\begin{proof}[Proof of \Cref{corr:Ktt+1}]

For the lower bound, we will show that every graph $G$ which is weakly $(n,K_{t,t+1})$-saturated, contains a proper subgraph $G'$ which is weakly $(n,K_{t,t})$-saturated, and therefore $\wsat(n,K_{t,t+1}) > \wsat(n,K_{t,t})$. Indeed, let $e_1,\ldots,e_t$ be an ordering of $E(K_n)\setminus E(G)$ such that the addition of $e_i$ to $G \cup \{e_1,\ldots,e_{i-1}\}$ creates a copy $H_i$ of $K_{t,t+1}$, such that $e_i \in H_i$. Note that this implies that the addition of each edge also creates a copy $H_i'$ of $K_{t,t}$ with $e_i \in H_i'$. Observe that at the start of the process, $E(H_1) \setminus \{e_1\} \subseteq E(G)$. Therefore $G$ contains a copy of $K_{t,t+1}\setminus \{e\}$, for some edge $e$. In particular, $G$ contains a copy of $K_{t,t}$. So there exists a weakly $(K_n,K_{t,t})$-saturated subgraph $G' \subseteq G$, where $|E(G')| \le |E(G)| - 1$.

For the upper bound, we will construct a weakly $(K_n,K_{t,t+1})$-saturated graph $F_n$ with $(t-1)(n + 1 - t/2) + 1$ edges. Let $X$, $Y \cup \{y^*\}$ and $Z$ be disjoint sets of vertices of cardinality $t$, $t$ and $n-2t$, respectively. Define $F_n$ to be the graph on vertex set $X \cup Y \cup Z \cup \{y^*\}$ and $uv$ is an edge of $F_n$ if and only if either $u \in X$ and $v \in Y \cup \{y^*\}$, $u \in Z$ and $v \in Y$ or $u,v \in X$. That is, $X$ is a clique on $t$ vertices, $Y \cup \{y^*\}$ and $Z$ are both independent sets, and $F_n(X\cup Z,Y)$ is a complete bipartite graph. Observe that $F_n$ has $(t-1)(n + 1 - t/2) + 1$ edges. It is easy to check that $F_n$ is weakly $(K_n,K_{t,t+1})$ saturated: first add the edges from $y^*$ to $Z$, then add the edges between $X$ and $Z$, then the edges within $Z$ and finally the edges within $Y \cup \{y^*\}$.
\end{proof}

\section{Proof of Theorem~\ref{thm:genst}}\label{sec:Kst}

In this section we prove Theorem~\ref{thm:genst}, i.e., we asymptotically determine the value of $\wsat(n, K_{s,t})$ whenever $2\le s < t$ and $n\ge 4t$. We start by describing a construction to give an upper bound for $\wsat(K_n,K_{s,t})$ in the spirit of Construction~\ref{const:ktt}.
\begin{const}\label{const:kst}
We define $V(H_n) =X \cup \{x^*\} \cup Y_1 \cup Y_2 \cup W \cup Z$, where $X$, $Y_1$, $Y_2$, $W$ and $Z$ are disjoint subsets of $[n]$ of cardinality $s-1$, $t-s$, $s-1$, $s-1$ and $n-t-2s+2$, respectively. As $s < t$, note that $Y_1\neq \emptyset$. Let $Y := Y_1 \cup Y_2$.
We have that $uv$ is an edge of $H_n$ if and only if either $u \in X$ and $v \in W \cup Y$, $u = x^*$ and $v\in Y$, $u\in Z$ and $v\in Y_2$ , or $u,v \in Y_1 \cup Y_2$. That is, $Y$ is a clique on $t-1$ vertices and $X \cup \{x^*\}$, $W$ and $Z$ are all independent sets, $H_n[X \cup \{x^*\},Y]$ is a complete bipartite, and all vertices in $W\cup Z$ have degree $s-1$.
\end{const}

\begin{figure}[htbp]
\centering
\includegraphics[width=0.6\textwidth]{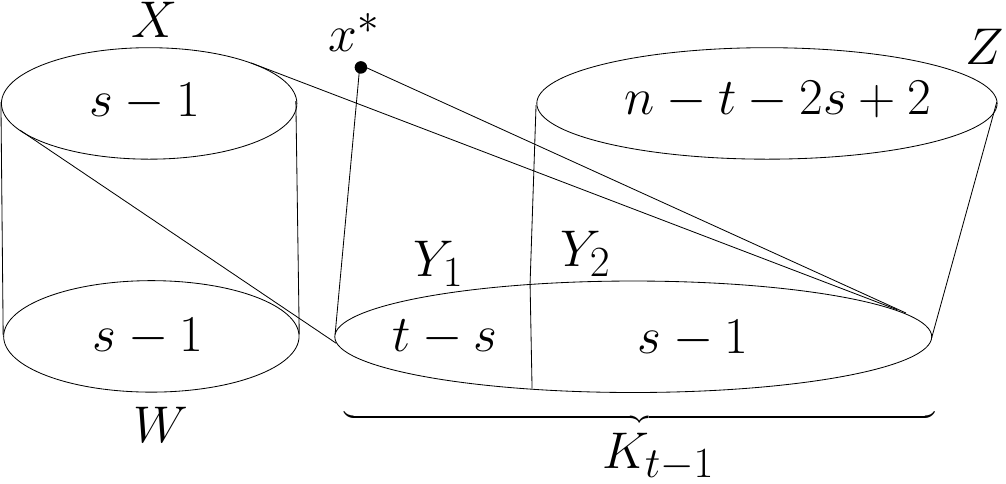}
\caption{The graph $H_n$. The sets $X$, $W$ and $Z$ are independent sets of cardinalities $s-1$, $s-1$ and $n-t - 2s+2$, respectively. All edges between $X$ and $W \cup Y_1 \cup Y_2$, between $x^*$ and $Y_1 \cup Y_2$ and between $Y_2$ and $Z$ are present. Moreover, $Y_1 \cup Y_2$ is a clique on $t-1$ vertices.}
\end{figure}

The upper bound follows from showing that $H_n$ is weakly $(K_n,K_{s,t})$-saturated.

\begin{proposition}\label{prop:KstWsatUpper}
Let $t> s \geq2$ and $n \ge 2(s + t)-3$. Then $$\wsat(n,K_{s,t}) \leq (s-1)(n-s)+\binom t2.$$
\end{proposition}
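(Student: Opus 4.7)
The plan is to verify Construction~\ref{const:kst} directly: first, a routine count gives $|E(H_n)| = (s-1)^2 + (s-1)(t-1) + (t-1) + (s-1)(n-t-2s+2) + \binom{t-1}{2} = (s-1)(n-s) + \binom{t}{2}$ (using $(t-1) + \binom{t-1}{2} = \binom{t}{2}$), so it remains to exhibit an ordering $e_1, \ldots, e_m$ of $E(K_n) \setminus E(H_n)$ such that for each $i$ the graph $H_n \cup \{e_1, \ldots, e_i\}$ contains a copy of $K_{s,t}$ through $e_i$. I would partition the missing edges into classes indexed by which of the parts $X$, $\{x^*\}$, $Y_1$, $Y_2$, $W$, $Z$ their endpoints lie in, and process the classes in a carefully chosen order; within each class the $K_{s,t}$ template will not depend on the internal ordering.

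I would begin with two classes that can be completed directly from $H_n$. For a missing edge $x^*w$ with $w \in W$, the $K_{s,t}$ on sides $X \cup \{x^*\}$ and $Y \cup \{w\}$ is complete except for $x^*w$ itself, since the edges from $X$ to $Y \cup W$ and from $x^*$ to $Y$ all lie in $H_n$. For a missing edge $y_1 z$ with $y_1 \in Y_1$ and $z \in Z$, the $K_{s,t}$ on sides $Y_2 \cup \{y_1\}$ and $X \cup \{x^*\} \cup (Y_1 \setminus \{y_1\}) \cup \{z\}$ is complete except for $y_1 z$, since $Y$ is a clique, both $X$ and $x^*$ are completely joined to $Y$, and $Y_2$ is completely joined to $Z$.

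After these two classes every $z \in Z$ is adjacent to all of $Y$, and every $w \in W$ is adjacent to $X \cup \{x^*\}$. Next I would process the $W$--$Z$ class using the $K_{s,t}$ on $X \cup \{z\}$ versus $Y \cup \{w\}$, and then the $W$--$Y$ class using sides $\{w\} \cup S'$ and $\{y\} \cup T'$, where $S' \subseteq Y \setminus \{y\}$ has size $s-1$ (possible since $t-2 \ge s-1$) and $T' \subseteq Z$ has size $t-1$; this last step requires $|Z| = n - t - 2s + 2 \ge t - 1$, which is exactly the hypothesis $n \ge 2(s+t) - 3$. After the $W$--$Y$ stage, every vertex of $W \cup Y$ is adjacent to everything outside its own part. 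The remaining classes --- $X$--$Z$, $x^*$--$Z$, $X$--$x^*$, and the internal edges of $X$, $W$, and $Z$ --- can all then be handled by a uniform template: for an edge $uv$ with $u,v$ in these classes, one side of the $K_{s,t}$ is $\{u\} \cup W$ (or $\{u\} \cup X$) and the other is $\{v\} \cup Y$, and every required edge is present because by this stage $Y$ and $W$ are joined to everything outside themselves.

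The main obstacle is not any individual verification but the ordering itself: several classes exhibit chicken-and-egg dependencies. The natural attempt to add $X$--$Z$ first using sides $X \cup \{x^*\}$ versus $Y \cup \{z\}$ fails, because it needs every element of $X \cup \{x^*\}$ to already be adjacent to $z$. The resolution is to first install $Y_1$--$Z$, $x^*$--$W$, and $W$--$Z$ using the structure of $H_n$, and then exploit the large set $Z$ as a reservoir of common neighbors to install $W$--$Y$ --- which is precisely where the hypothesis on $n$ enters. Once $Y$ and $W$ are universally joined to the rest of the graph, all remaining edges fall out of the single two-part template described above.
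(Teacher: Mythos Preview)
Your proposal is correct and follows essentially the same route as the paper's proof: the same ordering of edge classes ($x^*$--$W$, then $Y_1$--$Z$, then $W$--$Z$, then $W$--$Y$, then everything inside $X\cup\{x^*\}\cup Z\cup W$), the same $K_{s,t}$ templates at each stage, and the same identification of the $W$--$Y$ step as the place where the hypothesis $n\ge 2(s+t)-3$ is used via $|Z|\ge t-1$. The only cosmetic differences are that the paper treats the $W$--internal edges as a separate step (using $Z'\cup\{u\}$ versus $Y\cup\{v\}$) before the final sweep, whereas you absorb them into your uniform template via the ``or $\{u\}\cup X$'' alternative, and that the paper explicitly notes $H_n$ is $K_{s,t}$-free (every vertex of $W\cup Z$ has degree $s-1$, and the remaining $s+t-1$ vertices cannot host a $K_{s,t}$), which you should also record.
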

\begin{proof}
Observe that $H_n$ is $K_{s,t}$-free. Indeed, every vertex in $W\cup Z$ has degree $s-1$, and thus none of them are present in a copy of $K_{s,t}$ (as $s < t$). Now look at the subgraph $H_n(X,Y)$ obtained by removing these vertices. This graph has $s+t-1$ vertices and thus contains no copy of $K_{s,t}$.
	We will show that $H_n$ is weakly $(K_n,K_{s,t})$-saturated by adding the edges of $E(K_n) \setminus E(H_n)$ in such a way that the addition of each edge $e$ creates a copy of $K_{s,t}$ containing $e$.
	
	We are first able to add all edges between $x^*$ and $W$. Indeed, consider $e =x^*w$ for some $w\in W$. Observe that $H_n[X \cup \{x^*\}, Y\cup \{w\}]$ is a copy of $K_{s,t}\setminus \{e\}$ and hence $e$ can be added. 
	
	We next show that we can add all edges between $Y_1$ and $Z$. Consider $e=yz$, where $y\in Y_1$ and $z\in Z$. Observe that $|Y_2\cup \{y\}|=s$ and $|(Y_1\setminus \{y\})\cup X\cup \{x^*,z\})|=t$ and thus $H_n[Y_2\cup \{y\},(Y_1\setminus \{y\})\cup X\cup \{x^*,z\}]$  is a copy of $K_{s,t}\setminus \{e\}$. Hence $e$ can be added. 
	
	We can now add all edges between $W$ and $Z$. Let $e= wz$, where $z\in Z$ and $w\in W$. Then $|X\cup \{z\}|=s$ and $|Y\cup \{w\}|=t$,  and thus $H_n[X\cup \{z\},Y \cup \{w\}]$ is a copy of $K_{s,t}\setminus \{e\}$. Hence $e$ can be added.

	For the next step, we show that we can add all edges between $Y$ and $W$. Let $e= wy$ where $w\in W$ and $y\in Y$. Let $\tilde{Y}$ be a subset of $Y\setminus \{y\}$ of size $s-1$, and $\tilde{Z}$ a subset of $Z$ of size $t-1$. Then $|\tilde{Y}\cup \{w\}|=s$ and $|\tilde{Z}\cup \{y\}|=t$, and thus $H_n[\tilde{Y}\cup \{w\},\tilde{Z}\cup \{y\}]$ is a copy of $K_{s,t}\setminus \{e\}$. Hence $e$ can be added.

    Next we can add all edges within $W$. Let $e = uv$, where $u, v \in W$ and let $Z'$ be a subset of $Z$ of size $s-1$. Observe that $H_n[Z' \cup \{u\}, Y \cup \{v\}]$ is a copy of $K_{s,t}\setminus \{e\}$. Hence $e$ can be added.
	
	It remains to show that any edge $e= uv$, where $u,v \in X\cup \{x^*\}\cup Z$ can be added. As all edges outside of $X\cup\{x^*\}\cup Z$ have been added, and $|Y\cup W|=s+t-2$, then $H_n[W\cup \{u\},Y\cup\{v\}]$ is a copy of  $K_{s,t}\setminus \{e\}$. This completes the proof.
\end{proof}

For a lower bound for $\wsat(K_{n},K_{s,t})$, inspired by the argument in~\cite{MoshShapira} (see also \Cref{appen:BipInBip}), we can show the following.

\begin{proposition}\label{prop:KstWsatLower}
Let $t> s \ge 2$ and $n \ge 3t-3$. Then $$\wsat(n,K_{s,t}) \geq (s-1)(n-t+1)+\binom t2.$$
\end{proposition}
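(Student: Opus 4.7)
The plan is to apply Lemma~\ref{LinearAlgebraBootstrapPer}, in the spirit of the Moshkovitz--Shapira argument used for Theorem~\ref{thm:wsat-imp} (see Appendix~\ref{appen:BipInBip}) but adapted to the clique ambient graph. I will construct vectors $\{f_e : e \in E(K_n)\}$ in a vector space $W$ such that every copy of $K_{s,t}$ in $K_n$ admits a non-trivial dependence among its edge-vectors, and that the span of $\{f_e\}$ has dimension at least $(s-1)(n-t+1) + \binom{t}{2}$.

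The dependence condition can be set up exactly as in the proof of Lemma~\ref{lem:low}. Let $\{\mathbf{u}_v : v \in V(K_n)\} \subseteq \mathbb{R}^{s-1}$ be a family of vectors in general position (so any $s-1$ are linearly independent and any $s$ are minimally dependent). The principal component of $f_e$ for $e = xy$ lives in $\bigoplus_v \mathbb{R}^{s-1}$ and is defined by $\pi_x(f_e) = \mathbf{u}_y$, $\pi_y(f_e) = \mathbf{u}_x$, and $\pi_v(f_e) = 0$ for $v \neq x, y$. For a copy $H$ of $K_{s,t}$ with sides $A = \{a_1, \dots, a_s\}$ and $B = \{b_1, \dots, b_t\}$, coefficients of product form $c_{ij} = \alpha_i \beta_j$ work, where $\sum_i \alpha_i \mathbf{u}_{a_i} = 0$ is the unique dependence (so all $\alpha_i \neq 0$) and $\sum_j \beta_j \mathbf{u}_{b_j} = 0$ is a dependence with all $\beta_j \neq 0$ (which exists because the dependence space of the $t \geq s$ vectors has dimension $t-s+1 \geq 2$, and the all-nonzero subset is Zariski-open). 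Verification follows Claim~\ref{cl:dep} verbatim.

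The subtler part is bounding the span dimension from below. The principal component alone spans a subspace of dimension at most $n(s-1)$, which, when $t > 2s-2$, falls short of $(s-1)(n-t+1) + \binom{t}{2}$ by $\binom{t}{2} - (s-1)(t-1)$. To close this gap, I would augment $W$ with an auxiliary component $W'$ carrying an entry built symmetrically from $\mathbf{u}_x$ and $\mathbf{u}_y$ that is automatically killed by any product-form dependence. A canonical such entry is the symmetric tensor $\mathbf{u}_x \odot \mathbf{u}_y \in S^2(\mathbb{R}^{s-1})$, for which $\sum c_{ij}(\mathbf{u}_{a_i} \odot \mathbf{u}_{b_j}) = (\sum_i \alpha_i \mathbf{u}_{a_i}) \odot (\sum_j \beta_j \mathbf{u}_{b_j}) = 0$ is automatic. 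To witness the dimension bound, I would exhibit a subgraph $G^* \subseteq K_n$ with $(s-1)(n-t+1) + \binom{t}{2}$ edges whose vectors $\{f_e : e \in E(G^*)\}$ are linearly independent; a natural candidate is a clique on a fixed $t$-vertex set $T$ together with $s-1$ edges from each vertex outside $T$ into $T$, adjusted by a few edges to hit the exact count. Linear independence would then be verified in the style of Claim~\ref{cl:dim}: first use projections $\pi_v$ for $v \notin T$ to force coefficients of edges incident to the low-degree vertices to vanish, then use the clique on $T$ together with the auxiliary component to eliminate the remaining coefficients.

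The central difficulty is specifying $W'$ precisely so that it preserves the dependence for every copy of $K_{s,t}$ while contributing enough new dimensions---particularly when $t$ is much larger than $s$, where the single symmetric tensor $S^2(\mathbb{R}^{s-1})$ supplies only $\binom{s}{2}$ extra dimensions and further augmentations (such as symmetric components tied to a designated clique substructure on $T$, or additional bilinear invariants of $\mathbf{u}_x, \mathbf{u}_y$) must be introduced. Coordinating this augmentation with a well-chosen $G^*$ to run the Claim~\ref{cl:dim}-style elimination is the main technical hurdle.
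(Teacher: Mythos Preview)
Your proposal takes a very different route from the paper. The paper does \emph{not} reapply Lemma~\ref{LinearAlgebraBootstrapPer} here; instead it gives a short combinatorial reduction to Theorem~\ref{thm:KttWsat}. Given a minimum weakly $(K_n,K_{s,t})$-saturated graph $G$, one adjoins $t-s$ new vertices, each joined to all of $V(G)$, to obtain $G'$ on $n+t-s$ vertices with $\wsat(n,K_{s,t})+n(t-s)$ edges. Every $K_{s,t}$ created in the process for $G$ extends to a $K_{t,t}$ in $G'$ by absorbing the new vertices into the small side, and the edges among the new vertices can be added at the end; hence $G'$ is weakly $(K_{n+t-s},K_{t,t})$-saturated. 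Comparing $|E(G')|$ with $\wsat(n+t-s,K_{t,t})=(t-1)(n-s+1)+\binom{t}{2}$ and rearranging yields the bound. This \emph{is} the Moshkovitz--Shapira argument of Appendix~\ref{appen:BipInBip}, transplanted to the clique; you cited the right source but then tried to reopen it as a direct linear-algebra construction rather than run the reduction.

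Your direct approach has a real gap, which you identify but do not close. Placing the $\mathbf{u}_v$ in $\mathbb{R}^{s-1}$ is forced: in $\mathbb{R}^{t-1}$ the $s$ vectors on the small side of any $K_{s,t}$ are linearly independent, and projecting onto a large-side vertex forces all incident coefficients to vanish, so no non-trivial dependence exists at all. But with $\mathbb{R}^{s-1}$ the principal component spans at most $n(s-1)$ dimensions, and the deficit $\binom{t}{2}-(s-1)(t-1)$ grows quadratically in $t$ for fixed $s$. Any auxiliary coordinate built solely from $\mathbf{u}_x$ and $\mathbf{u}_y$ --- the symmetric tensor $\mathbf{u}_x\odot\mathbf{u}_y$, higher symmetric powers, or any other polynomial invariant --- lands in a space whose dimension depends only on $s$, so it cannot close a gap of order $t^2$. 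Your fallback of tying extra coordinates to a designated set $T$ is not specified, and any such scheme must preserve the dependence for \emph{every} copy of $K_{s,t}$ in $K_n$, not just those meeting $T$ in a prescribed way; it is far from clear this can be arranged. The reduction sidesteps all of this by passing to the $K_{t,t}$ setting, where the ambient dimension $n(t-1)$ used in Lemma~\ref{lem:low} is already large enough.
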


\begin{proof}
Let $G\subseteq K_{n}$ be a graph on $n$ vertices, with $\text{wsat}(K_{n},K_{s,t})$ edges, and assume that $G$ is weakly $(K_n,K_{s,t})$-saturated with the corresponding ordering of the missing edges $\{e_1,\dots, e_h\}$ and such that $C_i$ is a copy of $K_{s,t}$ created by adding $e_i$. 
 Let $G'$ be a graph obtained by $G$ as follows. $V(G')=V(G)\cup X$, where $X$ is a set of size $t-s$ disjoint from $V(G)$, and $E(G')=E(G)\cup \{vu \mid v\in X,\ u\in V(G) \}$.  Then $G'$ is a graph with $n+t-s$ vertices and has $\text{wsat}(K_{n},K_{s,t})+n(t-s)$ edges. Now, note that $G'$ is weakly $(K_{n+t-s},K_{t,t})$-saturated. Indeed, the edges $e_i$, $1\leq i\leq h$, can still be added using $C_i$ together with the $t-s$ new vertices that were added. The edges inside $X$  can then be added using any $2t-2$ vertices from $V(G)$. By the minimality of $\text{wsat}$, we have that $|E(G')|\geq \text{wsat}(K_{n+t-s},K_{t,t})= (t-1)(n-s+1)+\binom t2$ (by \Cref{thm:KttWsat}). All together,   we obtain $\text{wsat}(K_{n},K_{s,t})+n(t-s)\geq (t-1)(n-s+1)+\binom t2$, that is, $\text{wsat}(K_{n},K_{s,t})\geq (n-t+1)(s-1)+\binom t2$.
\end{proof}

Note that this lower bound matches the upper bound from \Cref{prop:KstWsatUpper} when $s=t-1$, which is the content of \Cref{corr:Ktt+1}. It is also worth mentioning that the upper and lower bounds differ only by $(t-s-1)(s-1)$, and thus \Cref{prop:KstWsatUpper} and \Cref{prop:KstWsatLower} imply Theorem~\ref{thm:genst}.

\section{Conclusion}\label{sec:con}
In this paper, for $n \ge 3t-3$  we have exactly determined $\wsat(n,K_{t,t})$ (see Theorem~\ref{thm:KttWsat}) and $\wsat(n,K_{t,t+1})$ (see Corollary~\ref{corr:Ktt+1}).
Now that $\wsat(n,K_t)$ and $\wsat(n,K_{t,t})$ are known, the next natural question is to consider balanced multipartite graphs. Let $K_{t}^k$ denote the complete multipartite graph containing $k$ parts each of size $t$. A generalisation of our construction for $\wsat(n,K_{t,t})$ yields a plausibly tight upper bound for $\wsat(n,K_{t}^k)$. We are curious as to whether this could be best possible for large $n$  (note that for our constructions to be weakly saturated we need a lower bound on $n$).

\begin{const}\label{const:kkt}
Let $F_n^{k,t}$ be an $n$-vertex graph on vertex set $X \cup Y \cup Z$, where $X \cup Y$ contains a complete $k$ partite graph with vertex classes $X = C_1$, $Y= C_2,\ldots,C_k$ where $|C_i| = t$ for $i \le k-1$ and $|C_k| = t-1$; $X$ induces a clique on $t$ vertices; and, $Z$ is an independent set of size $n - tk + 1$ such that $F_n^{k,t}[Y,Z]$ is a complete bipartite graph.
\end{const}

\begin{figure}[htbp]
\centering
\includegraphics[width=0.3\textwidth]{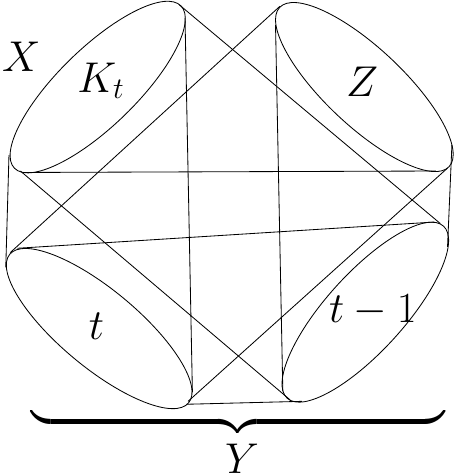}
\caption{The graph $F_n^{3,t}$. The set $X$ is a clique on $t$ vertices while $Z$ is an independent set on $n-3t +1$ vertices. Additionally, $Y$ induces a complete bipartite graph on vertex classes of sizes $t$ and $t-1$. Every edge between $Y$ and $X \cup Z$ is present.}
\end{figure}

It is not difficult to check that the graph $F_{n}^{k,t}$ is $K^{k}_{t}$-weakly-saturated (the details are left to the reader) which implies
$\wsat(K_n, K_{t}^{k}) \leq |E(F_n^{k,t})|$ for $n\geq (k+1)t - 2$.
Observe that $K_r$ is isomorphic to $K_1^r$ and so, by Theorem~\ref{thm:clique}, we have $\wsat(K_n, K_r) = |E(F_{n}^{r,1})|$.

\begin{ques}
Is there $n_0$ such that for all $n \ge n_0$, we have 
$\wsat(K_n, K_{t}^{k}) = |E(F_n^{k,t})|$?
\end{ques}
 Let us now turn our attention to unbalanced bipartite graphs. For $2 \le s < t$ we have provided an asymptotically tight bound on $\wsat(n,K_{s,t})$ (see Theorem~\ref{thm:genst}). 
It would be interesting to pin this value down precisely. We wonder if Construction~\ref{const:kst} is best possible for large $n$, i.e. if Proposition~\ref{prop:KstWsatUpper} is tight for large $n$.

\begin{ques}
Is there some $n_0$ such that for all $n \ge n_0$ and $t >  s +1 \ge 2$, we have $$\wsat(n,K_{s,t}) = (s-1)(n-s) + \binom{t}{2}?$$
\end{ques}

In particular, we believe that the lower bound given by Proposition~\ref{prop:KstWsatLower} is not tight in general; it seems that by analysing the process more carefully it could be possible to add fewer extra edges to convert the $K_{s,t}$ process into the $K_{t,t}$ process.

Although Corollary~\ref{cor:rel} reveals a relationship between $\wsat(n,K_t)$ and $\wsat(K_{\ell,m},K_{t,t})$, it is not obvious how to use knowledge of the weak saturation numbers of $K_{s,t}$ within $K_{\ell,m}$ to bound $\wsat(n,K_{s,t})$. It is plausible (but wrong) to believe that for $s<t$, we have $\wsat(n,K_{s,t}) > \wsat(K_{\ell,m}, K_{s,t})$ (where $\ell + m = n$). In $K_n$, the process must eventually add more edges (those not respecting the bipartition), but we also have more freedom to choose the edges of the ``starting" graph, and it is not obvious that we cannot ``save" some edges.

We would like to emphasize that comparing the bounds given by \Cref{prop:KstWsatUpper} and \Cref{prop:KstWsatLower} for $\wsat(n,K_{s,t})$ to the value of $\text{wsat}(K_{\ell,m},K_{s,t})$ (given by Theorem~\ref{thm:wsat-imp}) where $\ell+m=n$, shows that $\text{wsat}(F,H)$ does not obviously compare to $\text{wsat}(F',H)$ when $F'\subseteq F$.
Indeed, by \Cref{prop:KstWsatUpper}, when $t>3s$ we get $\wsat(K_n,K_{s,t}) < \wsat(K_{\ell,m},K_{s,t})$, and by \Cref{prop:KstWsatLower}, for $2s>t$ we get $\wsat(K_n,K_{s,t}) \geq \wsat(K_{\ell,m},K_{s,t})$.

We therefore believe it is interesting to consider the following. 
 
\begin{ques}
Let $2 \le s < t$ and $k + \ell = n$, for large $n$. When do we have
$$\wsat(n,K_{s,t}) > \wsat(K_{\ell,m},K_{s,t})?$$
\end{ques}

It would also be very interesting to determine weak saturation numbers of general unbalanced multipartite graphs. Let $K_{a_1,\ldots,a_k}$ denote the complete multipartite graph with parts of size $a_1,\ldots,a_k$.

\begin{ques}
What is $\wsat(n,K_{a_1,\ldots,a_k})$?
\end{ques}

We remark that for $a_1, \ldots, a_{k-1} \le a_k$, an argument analogous to the one in Proposition~\ref{prop:KstWsatLower} would give a lower bound on 
$\wsat(n, K_{a_1, \ldots, a_k})$ based on $\wsat(n + (a_k - a_{k-1}) + \ldots + (a_k - a_1), K_{a_k, \ldots, a_k})$.
In general, $\wsat(n, K_{a_1, \ldots, a_k})$ can always be lower bounded based on $\wsat(n + (s_1 - a_1) + \ldots + (s_k - a_k), K_{s_1,\ldots, s_k})$ as long as $a_i \le s_i$ for $i \in [k]$. Although this method could give a good lower bound for an asymptotic result, we do not believe that it would give a tight bound.



\section*{Acknowledgment}
This work began while the authors were visiting the Instituto Nacional de Matem\'atica Pura e Aplicada in Rio de Janeiro as part of the Graphs@IMPA thematic program.
We are very grateful to IMPA and the organizers for the support to be able to attend the event and the great working environment.

\newcommand{\cpc}{Combin. Probab. Comput. }
\newcommand{\discrete}{Discrete Math. }
\newcommand{\eur}{European J.~Combin. }
\newcommand{\jcta}{J.~Combin. Theory Ser.~A }
\newcommand{\jctb}{J.~Combin. Theory Ser.~B }
\newcommand{\rsa}{Random Structures Algorithms }

\appendix
\section{Proof of Theorem~\ref{thm:wsat-imp}}\label{appen:BipInBip}

Here we will prove Theorem~\ref{thm:wsat-imp} by generalising the argument from~\cite{MoshShapira} for Theorem~\ref{thm:MS}.

\begin{proof}[Proof of Theorem~\ref{thm:wsat-imp}]

We start with the upper bound. Let $X_1,X_2,X_3,Y_1,Y_2,Y_3$ be disjoint sets of vertices such that $|X_1|=|Y_1|=s-1$, $|X_2|=|Y_2|=t-s$, $|X_3|=\ell-t+1$, $|Y_3|=m-t+1$. Denote $X=X_1\cup X_2\cup X_3$ and $Y=Y_1\cup Y_2\cup Y_3$. Note that $|X|=\ell$ and $|Y|=m$. Let $G_0\subseteq K_{\ell,m}$ be a bipartite graph on vertex set $X\cup Y$ where $vu\in E(G_0)$ if and only if $v\in X_1$ and $u\in Y$, or $v\in Y_1$ and $u\in X$, or $v\in X_2$ and $u\in Y_2$. Note that  $|E(G_0)|=(\ell+m)(s-1)+(t-s)^2$. We will show that $G_0$ is weakly $(K_{\ell,m},K_{s,t})$-saturated. 

Since $G_0[X_1\cup X_2,Y_1\cup Y_2]$, $G_0[X,Y_3]$ and $G_0[X_3,Y]$  are all complete bipartite graphs, we only need to show how to add the edges $vu$ when $v\in X_3$ and $u\in Y_2\cup Y_3$, and when $v\in Y_3$ and $u\in X_2$.

We start with adding the edges $vu$ when $v\in Y_3$ and $u\in X_2$. Since $|\{u\}\cup X_1|=s$, $|\{v\}\cup X_1\cup X_2|=t$, and  $G_0[\{u\}\cup X_1,\{v\}\cup Y_1\cup Y_2]$ is a copy of $K_{s,t}$ minus one edge, we can add the missing edge $vu$. In a similar fashion, we can add edges $vu$ when $v \in Y_2$ and $u \in X_3$. Now we only need to add the edges $vu$ for $v\in X_3$ and $u\in Y_3$. Since $|\{v\}\cup X_1|=s$, $|\{u\}\cup Y_1\cup Y_2|=t$, and  $G_0[\{v\}\cup X_1,\{u\}\cup Y_1\cup Y_2]$ is a copy of $K_{s,t}$ missing $vu$, we can add the edge $vu$. This shows that $G_0$ is weakly $(K_{\ell,m},K_{s,t})$-saturated and completes the proof of the upper bound.

For the lower bound, we will apply Theorem~\ref{thm:MS}. Let $G\subseteq K_{\ell,m}$ be a weakly $(K_{\ell,m},K_{s,t})$-saturated graph with $\wsat(K_{\ell,m},K_{s,t})$ edges. Let $\{e_1,\dots, e_h\}$ be an ordering of $E(K_{\ell,m}\setminus G)$ such that $C_i$ is a copy of $K_{s,t}$ in $G \cup \{e_1,\ldots,e_i\}$ containing $e_i$.

Denote by $X_1$ the vertex set of $G$ of size $\ell$, and by $Y_1$ the vertex set of $G$ of size $m$. Let $X_2, Y_2$ be two disjoint sets (also disjoint from $X_1,Y_1$), each of size $t-s$. Let $G'$ be the bipartite graph with parts $X=X_1\cup X_2$ and $Y=Y_1\cup Y_2$, obtained from $G$ as follows. $E(G')=E(G)\cup \{vu \mid v\in X_2,\ u\in Y_1 \}\cup \{vu \mid v\in Y_2,\ u\in X_1 \}$.  Then $G'$ is a bipartite graph with parts of size $\ell+t-s$ and $m+t-s$, and has $\text{wsat}(K_{\ell,m},K_{s,t})+(\ell+m)(t-s)$ edges. Now, note that $G'$ is weakly $(K_{\ell+t-s,m+t-s},K_{t,t})$-saturated. Indeed, the edges $e_i$, $1\leq i\leq h$, can still be added using $C_i$ together with the $t-s$ new vertices that were added to one of the sides. The edges between $X_2$ and $Y_2$ can then be added using $t-1$ vertices from $X_1$ and $t-1$ vertices from $Y_1$. By the minimality of $\text{wsat}$, we have that $$|E(G')|\geq \text{wsat}(K_{\ell+t-s,m+t-s},K_{t,t})= (\ell+t-s)(m+t-s)-(\ell-s+1)(m-s+1).$$ As  $|E(G')| = \text{wsat}(K_{\ell,m},K_{s,t})+(\ell+m)(t-s)$, we obtain $$\text{wsat}(K_{\ell,m},K_{s,t})\geq \ell m-(\ell+t-s)(m+t-s)+(t-s)^2,$$ as required.
\end{proof}
\end{document}